\numberwithin{equation}{section}
 \newtheorem{theorem}{Theorem}[section]
\newtheorem{definition}[theorem]{Definition}
\newtheorem{lemma}[theorem]{Lemma}
\newtheorem{remark}[theorem]{Remark}
\newtheorem{proposition}[theorem]{Proposition}
\newtheorem{claim}[theorem]{Claim}
\newcommand{\dl}{\frac{\partial}{\partial x_l}}
\newcommand{\di}{\frac{\partial}{\partial x_i}}
\newcommand{\dk}{\frac{\partial}{\partial y_k}}
\thanks{I am thankful to C. Pugh and M. Hirsch for helpful and instructive
discussions}
\begin{document}

\title{Homoclinic tangencies in ${\mathbb{R}}^n$}

\author
{Victoria Rayskin}

\address{Victoria Rayskin \hfill\break
       Department of Mathematics\\
       520 Portola Plaza\\
       Box 951555\\
       University of California\\
       Los Angeles, CA 90095-1555, USA}
\email{vrayskin\symbol{64}math.ucla.edu}

\date{}
\subjclass{37B10, 37C05, 37C15, 37D10}

\keywords{Homoclinic tangency, invariant manifolds, $\lambda$-Lemma,
order of contact, horseshoe structure}

\begin{abstract}

Let $f: M \rightarrow M$ denote a diffeomorphism of a smooth manifold $M$. Let $p \in M$ be its hyperbolic fixed point with
stable and unstable manifolds $W_S$ and $W_U$ respectively. Assume that $W_S$
is a curve. Suppose that $W_U$ and
$W_S$ have a {\it degenerate homoclinic crossing} at a point $B\neq p$, i.e.,
they cross at $B$ tangentially with a finite order of contact.

It is shown that, subject to $C^1$-linearizability and certain conditions on
the invariant manifolds, a transverse homoclinic crossing will arise
arbitrarily close to $B$. This proves the existence of a horseshoe structure
arbitrarily close to $B$, and extends a similar planar
result of Homburg and Weiss \cite{HW}.

\end{abstract}

\maketitle

\section{Introduction}

The celebrated Theorem of
Birkhoff-Smale (see, for instance, \cite{S}) provides a method for rigorously concluding the existence of a
horseshoe structure. The basic assumption of this theorem is the presence of a
transverse homoclinic point.

\begin{theorem}[Birkhoff-Smale]\label{BS}
Let $f: {\mathbb{R}}^n \to {\mathbb{R}}^n $ be a diffeomorphism with a hyperbolic fixed point $p$. If $q \not= p$ is a
transverse homoclinic point, then there exist
 a hyperbolic invariant set $\Lambda$, a set of bi-infinite sequences ${\Sigma}$
and a shift map $\sigma$ such that for some $m$ the following diagram commutes:
\[
\begin{array}{ccc}
\Lambda & \stackrel{f^m}{\rightarrow}& \Lambda \\
h\downarrow \ \  & \nonumber & \ \ \downarrow h \\
\Sigma & \stackrel{\sigma }{\rightarrow} & \Sigma
\end{array}
\]
Here $\Sigma = \{... A_{-2} A_{-1} . A_0 A_1 A_2 ... \} $, $A_i \in \{0,...,N \}$, \newline
$ \sigma \{... A_{ -2} A_{ -1} . A_{ 0} A_{ 1} A_{ 2} ... \} =
\{ ... A_{ -2} A_{ -1} A_{ 0} . A_{ 1} A_{ 2} ... \} $
\nopagebreak
 and $h$ is a homeomorphism mapping $\Lambda $ onto $\Sigma $.
\end{theorem}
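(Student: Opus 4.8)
The plan is to build a \emph{geometric horseshoe} along the homoclinic orbit and then read off the symbolic dynamics from it; the essential tools are the local hyperbolic structure at $p$ and the $\lambda$-Lemma (Inclination Lemma). First I would fix a small neighborhood $U$ of $p$ together with a local chart adapted to the hyperbolic splitting, so that the stable and unstable manifolds $W^s(p)$ and $W^u(p)$ appear as complementary coordinate planes and $f$ contracts the stable directions while expanding the unstable directions throughout $U$. Because $q$ is homoclinic, its forward orbit accumulates on $p$ along $W^s(p)$ and its backward orbit accumulates on $p$ along $W^u(p)$; hence all but finitely many points of the orbit of $q$ lie in $U$.

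Next I would choose a ``box'' $Q\subset U$ --- a product of a small stable disk and a small unstable disk straddling a point of the orbit of $q$ --- and analyze the return map. The key geometric input is the $\lambda$-Lemma: a disk $D$ transverse to $W^s(p)$ has forward images $f^k(D)$ converging in the $C^1$ sense to $W^u(p)$ on compact pieces. Applying this to a stable slice of $Q$, and exploiting the transversality of the homoclinic crossing at $q$, I expect to show that for a sufficiently large iterate $m$ the set $f^m(Q)$ meets $Q$ transversally in $N+1$ disjoint full-height strips, each stretched in the unstable directions and compressed in the stable directions --- the defining picture of a horseshoe with $N+1$ symbols.

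I would then define the invariant set $\Lambda=\bigcap_{k\in\mathbb{Z}}f^{mk}(Q)$ and code each point by its itinerary of visited strips, producing the map $h\colon\Lambda\to\Sigma$ that intertwines $f^m$ with the shift $\sigma$. To verify that $h$ is a homeomorphism and that $\Lambda$ is hyperbolic, I would impose cone conditions: transversality of the homoclinic intersection together with hyperbolicity at $p$ yields invariant stable and unstable cone fields on $Q$ under $f^m$, and these force the uniform contraction and expansion needed both for the conjugacy and for $\Lambda$ to be a hyperbolic set.

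The hard part will be the higher-dimensional geometry. Unlike the planar case, $W^s(p)$ and $W^u(p)$ may have arbitrary complementary dimensions, so one must apply the $\lambda$-Lemma to track the deformation of an entire stable disk (not merely a curve) toward $W^u(p)$, and check the cone conditions in all directions at once. Establishing that the return map $f^m$ genuinely realizes the full-shift strip structure with uniform hyperbolic estimates is the technical heart of the argument.
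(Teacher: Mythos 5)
The paper does not prove this theorem at all: it is quoted as the classical Birkhoff--Smale theorem with a pointer to Smale's paper \cite{S}, and serves only as background for the main result. So there is no proof in the paper to compare against, and your sketch must be judged on its own.

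Your outline is the standard textbook argument (local adapted coordinates at $p$, the $\lambda$-Lemma to bring unstable disks back across the box, a return map $f^m$ with full-crossing strips, cone fields for hyperbolicity, and itinerary coding), and it is essentially correct as a plan. Two points deserve more care than your sketch gives them. First, the geometry of the strips: what the construction naturally produces is a box $Q$ near a point of the homoclinic orbit such that $f^m(Q)\cap Q$ contains \emph{two} full-height components --- one from orbits that stay near $p$, one from orbits that make the excursion along the homoclinic loop --- so the natural conclusion is a full shift on two symbols (the statement's $N+1$ is harmless, but you should not suggest that an arbitrary prescribed number of strips appears automatically). Second, to conclude that $h$ is a homeomorphism \emph{onto} $\Sigma$ you must verify the Markov/full-crossing property (every finite itinerary is realized by a nonempty nested intersection of strips) and use the uniform contraction from the cone estimates to get injectivity and continuity of $h^{-1}$; and the invariant cone field must be controlled along the entire excursion outside the linearizing neighborhood, where you only have transversality at $q$ plus compactness of the finite orbit segment, not hyperbolicity of $f$ itself. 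These are exactly the technical points you flag at the end, so the gap is one of execution, not of strategy.
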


The assumption of transversality is not easy to verify for a concrete dynamical
system. There were many attempts to remove this assumption.
(See  K. Burns and H. Weiss~\cite{BW}, A. J. Homburg and H. Weiss~\cite{HW}, M. Hirsch~\cite{Hi},
R. Churchill and D. Rod~\cite{CR}, Gavrilov and Shilnikov~\cite{GS1},~\cite{GS2}, Gonchenko and
Shilnikov~\cite{GS}). These papers
address the question of existence of horseshoes arbitrarily close to a
homoclinic point in {\it planar} dynamics.

The object of the papers of Hirsch~\cite{Hi} and Churchill and Rod~\cite{CR}
was to show that, in the planar case, the manifolds $W_S$ and $W_U$ cross
transversely at some other point, arbitrarily close to the original crossing,
if the original crossing is two-sided (Figure~\ref{types_of_crossing}, case IV)
and of finite contact.

Churchill and Rod~\cite{CR} imposed the additional assumptions that $f$ is {\it
analytic} and {\it area-preserving}. In that case, by Moser's Theorem \cite{M},
this diffeomorphism  can be assumed to be in the Birkhoff normal form, and any
branch of the stable (unstable) manifold can be re-parameterized so that $f^n
(W_U) \cap W_S$ contains a positive orbit $q', fq', f^{2}q',..., f^{k}q'$ that
lies on a hyperbola. Then, direct calculations imply that $q'$ is a transverse
homoclinic point. But area preservation is a strong restriction. Furthermore,
analyticity allows one to
consider the case of a finite order
of contact only.

Hirsch~\cite{Hi},
replaced the above assumptions with the smooth linearizability assumption (near
$p$). If we substitute $L$, the linear part of $f$, for $f$, the order of
contact will not be changed, and we can calculate the ``slope'' between two
manifolds at the point $q'$, where $L^n (W_U)$ crosses $W_S$. If the crossing
at $q'$ is transverse, then $f^n (W_U) = \Phi {L}^n {\Phi}^{-1} (W_U)$ also
crosses $W_S$ non-degenerately, where $\Phi $ is the $C^1$ linearizing map.

Homburg and Weiss~\cite{HW} considered a
surface diffeomorphism and its homoclinic {\it one-sided}
(Figures~\ref{types_of_crossing}, \ref{newhousePic}) tangency of arbitrary high (possibly infinite)
order of contact. They proved that
some power of this diffeomorphism has the full shift on two symbols as a
topological factor. Then, Katok's theorem \cite{K1}, \cite{K2} implies that the
map possesses a horseshoe arbitrarily close to homoclinic tangency.

Important cases of homoclinic tangencies were also described in works of
Shilnikov, Gonchenko and Gavrilov (see \cite{GS},
\cite{GS1}, \cite{GS2}). They considered {\it one-sided}
planar homoclinic tangencies.

Gavrilov and Shilnikov \cite{GS1}, \cite{GS2} established a topological
conjugacy (on a closed invariant set) between a surface diffeomorphism having a
dissipative hyperbolic periodic point with certain types of quadratic ({\it
one-sided}) tangencies (Figures~\ref{types_of_crossing}, \ref{newhousePic}) and
the full shift on two symbols.

Gonchenko and Shilnikov considered higher orders of contact. Homburg and Weiss write: ``Gonchenko claims that
a proof of this result for finite order tangencies appeared in his unpublished
(Russian) thesis in 1984. The result for finite order tangencies was announced
by Gonchenko and Shilnikov~\cite{GS} in 1986, but we are unable to find any
proof in the literature.'' We were unable to find such a proof in the
existing literature either.
\bigskip

Homburg and Weiss, in their paper~\cite{HW}, state an Open Problem, asking whether their results could be extended to dimensions
higher than two.
In the present paper, we consider a diffeomorphism $f$ of a smooth
manifold $M$ (of an arbitrary dimension) with a hyperbolic fixed point. We assume that one of the
invariant manifolds of $f$ is a curve, that this curve has finite
tangential contact with the
other invariant manifold, and that the contact is not of Newhouse type (see Definition~\ref{newhouseDef} and
Figure~\ref{newhousePic}). We prove (Theorem~\ref{main-thm}) that, subject to
$C^1$-linearizability of $f$ and the existence of a graph portion of the
invariant manifold, a transverse homoclinic crossing will arise arbitrarily
close to the tangential crossing. This implies the existence of a horseshoe
arbitrarily close to the tangential crossing.

To prove this result, we consider (Section 3) tangent subspaces of stable and unstable
manifolds at a certain sequence of their intersection points. We show that, at
some intersection points, a pair of the tangent subspaces spans the entire
${\mathbb{R}}^n$. In Section 2 we present basic definitions and lemmas that are used
in Section 3. In particular, we prove tangential $\lambda$-Lemma~\ref{lemma}, essential for the proof of the main result.
Also, in Section 2, we present a new definition of the order of contact of two manifolds. This definition does not require high
differentiability and plays a key role in our approach.

\section{Definitions and Preliminaries}\label{2}

In this section we give basic definitions and state several preliminary lemmas
to be used in the proof of the main result. The references for this Section are
V. I. Arnold, S. M. Guse\u\i n-Zade~\cite{AVG-Z} and A. N. Varchenko, K. Burns and H. Weiss~\cite{BW}, M. Hirsch~\cite{Hi},
A. J. Homburg and H. Weiss~\cite{HW}, S. Newhouse and J. Palis~\cite{NP}, J. Palis~\cite{Pa}, V. Rayskin~\cite{R},
 R. Uribe-Vargas~\cite{U-V}.

Throughout this section, we will be considering two submanifolds in a certain
ambient smooth manifold (of arbitrary dimension). Suppose two such submanifolds
meet at an isolated point $A$. We will discuss the contact properties of these
submanifolds at point $A$. Since these properties are local, we may assume without loss of
generality that the ambient manifold is just ${\mathbb R}^n$.

First, assume that each manifold is a curve. The order of contact of two smooth
curves has been defined and studied in works of Arnold, Guse\u\i
n-Zade~\cite{AVG-Z} and A. N. Varchenko, Burns and Weiss~\cite{BW}, Hirsch~\cite{Hi}, Homburg and Weiss~\cite{HW}, Uribe-Vargas~\cite{U-V}. Their definitions of the
$l$-th order of contact require high differentiability; i.e. $l$ is the maximal
integer, such that the first $l-1$ derivatives of the two curves coincide at
the point of their contact.  Our definition below does not require high  differentiability and thus, allows us to apply $C^1$-linearization in our main
Theorem~\ref{main-thm}.

\begin{definition}\label{def-ooc-curve} \rm
Let ${\gamma}_i$ ($i=1,2$) denote two immersed $C^1$-curves in ${\mathbb{R}}^n$
. Suppose the two curves meet at an isolated point $A$. Then we will say that a
number $l$ is the {\it order of contact} of the curve $\gamma_1$ with the curve $\gamma_2$ at the point $A$ if there exist positive real numbers $m$
and $M$ such that for all points $x$ on $\gamma_1$ sufficiently close to $A$
\[
m\leq
 \frac
{d(x,\gamma_2)}{\|x - A  \|^l }
 \leq M \;.
\]
\end{definition}

\begin{proposition}\label{symmetry}
If the curves are $C^1$-smooth, then the definition of the order of contact is symmetric with respect
to $\gamma_1$ and $\gamma_2$.
\end{proposition}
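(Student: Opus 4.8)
The plan is to reduce the statement to the comparison of two scalar quantities along a single parametrization, exploiting the fact that, in suitable coordinates, the distance between the curves is comparable to their ``vertical gap.''

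First I would choose coordinates on ${\mathbb R}^n$ adapted to the configuration. Since $\gamma_1,\gamma_2$ are immersed and $C^1$, each has a well-defined nonzero tangent direction at $A$, giving at most two lines through $A$. The union of the two hyperplanes orthogonal to these directions is nowhere dense, so I can rotate so that the first coordinate axis $e_1$ is orthogonal to neither tangent. After translating $A$ to the origin, the implicit function theorem writes each curve locally as a graph over the $x_1$-axis: $\gamma_i=\{(s,\phi_i(s)):|s|<\delta\}$ with $\phi_i\colon(-\delta,\delta)\to{\mathbb R}^{n-1}$ of class $C^1$ and $\phi_i(0)=0$. Because $\phi_i(s)=\phi_i'(0)s+o(s)$, one has $\|(s,\phi_i(s))-A\|=|s|\sqrt{1+|\phi_i'(0)|^2}\,(1+o(1))$, so $\|x-A\|$ is comparable to $|s|$ (and $\|y-A\|$ to $|t|$) uniformly near $A$. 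This reduction uses only the $C^1$ hypothesis and needs nothing about whether the contact is tangential.

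The core step is the estimate
\[
\frac{1}{1+L}\,|\psi(s)|\;\le\;d\big((s,\phi_1(s)),\gamma_2\big)\;\le\;|\psi(s)|,
\qquad \psi:=\phi_1-\phi_2,
\]
valid for small $|s|$, where $L=\sup_{|u|\le\delta}|\phi_2'(u)|<\infty$. The upper bound is immediate by comparing with the competitor point $(s,\phi_2(s))\in\gamma_2$. For the lower bound I would let $t^*$ realize the distance; from $d^2=(s-t^*)^2+|\phi_1(s)-\phi_2(t^*)|^2$ I obtain both $|s-t^*|\le d$ and $d\ge|\phi_1(s)-\phi_2(t^*)|\ge|\psi(s)|-|\phi_2(t^*)-\phi_2(s)|$, while the mean value inequality bounds the last term by $L|t^*-s|\le L\,d$, which rearranges to the claim. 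For $x$ close to $A$ the minimizer lies on the local graph piece, so $d$ is the genuine distance to $\gamma_2$; this is where I use that $A$ is an isolated intersection and that a $C^1$ immersion is a local embedding. The same argument with the roles exchanged gives that $d\big((t,\phi_2(t)),\gamma_1\big)$ is comparable to $|\psi(t)|$, with the same function $\psi$.

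Combining these, the order of contact of $\gamma_1$ with $\gamma_2$ equals $l$ exactly when $|\psi(s)|$ is comparable to $|s|^l$ near $0$; by the symmetric estimate the order of contact of $\gamma_2$ with $\gamma_1$ equals $l$ exactly when $|\psi(t)|$ is comparable to $|t|^l$. These are the same condition, a mere renaming of the parameter, which proves the proposition. I expect the main obstacle to be the lower bound in the core estimate: controlling the location $t^*$ of the nearest point and replacing $\phi_2(t^*)$ by $\phi_2(s)$ using only $C^1$ regularity, so that everything must be carried out with the uniform Lipschitz bound $L$ and the elementary inequality $|s-t^*|\le d$. The preliminary coordinate choice avoiding tangents orthogonal to $e_1$ is precisely what keeps $L$ finite and the graph representation valid, including in the transverse case.
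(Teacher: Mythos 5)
Your proof is correct, and it takes a genuinely different route from the paper's. The paper flattens $\gamma_2$ onto the $X$-axis, observes that the order-of-contact hypothesis sandwiches $\gamma_1$ between two order-$l$ paraboloids ${\mathcal P}_m$ and ${\mathcal P}_C$, and then transfers the estimate by comparing $d(x,\gamma_1)$ for $x$ on the axis with $d(x,{\mathcal P}_m)$ and $d(x,{\mathcal P}_C)$, invoking (without proof) that distances from the axis to an order-$l$ paraboloid are uniformly equivalent to $|x|^l$. You instead put \emph{both} curves in graph form over a common axis and prove the two-sided bound $d\bigl((s,\phi_1(s)),\gamma_2\bigr)\asymp|\psi(s)|$ with $\psi=\phi_1-\phi_2$, via the competitor point $(s,\phi_2(s))$ for the upper bound and the Lipschitz/nearest-point argument for the lower bound; symmetry then drops out because both orders of contact reduce to the single scalar condition $|\psi|\asymp|\cdot|^l$. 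What your approach buys is self-containedness and symmetry of the argument itself: the two steps the paper leaves implicit (the sandwich inequality $d(x,{\mathcal P}_m)\le d(x,\gamma_1)\le d(x,{\mathcal P}_C)$ and the equivalence $d(x,{\mathcal P})\asymp|x|^l$) are exactly instances of your core estimate, so you prove more and assume less; the paper's version is shorter and more geometric but hides the same nearest-point analysis inside the paraboloid comparison. Your explicit attention to why the minimizer stays on the local graph piece (isolated intersection, local embedding) is a point the paper glosses over as well.
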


\begin{proof}{}
Since the curves are $C^1$-smooth, we can assume that $\gamma_2$ is the $X$-axis (in some coordinate system), and $\gamma_1$ is a graph of a $C^1$ function $f$ in this system. Let the order of contact of $\gamma_1$ with $\gamma_2$ be $l$. By Definition~\ref{def-ooc-curve} it means that
\[
m\leq
 \frac
{\|f(x)\|}{\left(x^2 + \|f(x)\|^2\right)^{l/2}}\leq M \;,
\]
so that there exists a constant $C$ such that for all
sufficiently small $x$
 $$
m \leq \frac{\|f(x)\|}{|x|^l} \leq C \;,
$$
which means that $\gamma_1$ is sandwiched between two order $l$ paraboloids
${\mathcal P}_m$ and ${\mathcal P}_C$, respectively.

Let us now look at the order of contact of $\gamma_2$ with
$\gamma_1$. By the above, for any point $x \in X$
$$
d(x,{\mathcal P}_m) \le d(x,\gamma_1) \le d(x,{\mathcal P}_C) \;.
$$
Since ${\mathcal P}_m$ and ${\mathcal P}_C$ are order $l$ paraboloids, the
distances to them are uniformly equivalent to $|x|^l$, whence the claim
\end{proof}

\begin{remark}\rm
Our notion of the order of contact is more
general than the classical one. Obviously, if the order of contact of two
curves is $l$ in the classical sense, then it is also $l$ in our sense.
The converse, however, need not be true.
\end{remark}

\begin{remark}\rm
Without the $C^1$ assumption Proposition~\ref{symmetry} may fail. For instance, let us consider the graphs of the functions $\gamma_1(x)=0$ and $\gamma_2(x)=x^2+(1+\sin(1/x))x$ on the plane. $\gamma_2$ oscillates between $x^2$ and $x$ with the distance between consecutive crests of order $x^2$. Then, the order of contact of $\gamma_1$ with $\gamma_2$ in the sense of our Definition~\ref{def-ooc-curve} is 2, whereas the order of contact of $\gamma_2$ with $\gamma_1$ does not exists.
\end{remark}

Naturally, the $l$-th
order of contact (in the
sense of \cite{AVG-Z}, \cite{BW}, \cite{Hi}, \cite{HW}, \cite{U-V}) between
two $C^l$-curves is preserved under a $C^1$-diffeomorphism.
Lemma~\ref{preserv-ooc-lemma} below shows that the order of
contact in the sense of our Definition~\ref{def-ooc-curve} is preserved by
$C^1$-diffeomorphisms. In particular, our order of contact from
Definition~\ref{def-ooc-curve} does not depend on the choice of a coordinate
chart in the ambient manifold, so that it is well-defined in
the general manifold setup as well.

\begin{lemma}\label{preserv-ooc-lemma}
Given two $C^1$-curves in ${\mathbb R}^n$ intersecting at an isolated point, any
diffeomorphism of a neighborhood of this point preserves the order of contact
of these curves.
\end{lemma}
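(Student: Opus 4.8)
The plan is to reduce everything to the elementary fact that a $C^1$-diffeomorphism is locally bi-Lipschitz. Write $\phi$ for the diffeomorphism, $A'=\phi(A)$, $\gamma_i'=\phi(\gamma_i)$, and let $l$ be the order of contact of $\gamma_1$ with $\gamma_2$ at $A$ in the sense of Definition~\ref{def-ooc-curve}. First I would fix a convex neighborhood $U$ of $A$ (a small ball) with $\overline U$ compact and contained in the domain of $\phi$. On $\overline U$ the derivatives $D\phi$ and $D\phi^{-1}$ are bounded by continuity, so the mean value inequality applied along segments in $U$ furnishes constants $0<c\le C$ with
\[
c\,\|x-y\|\le\|\phi(x)-\phi(y)\|\le C\,\|x-y\|\qquad(x,y\in U).
\]

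The heart of the argument is then to show that both quantities appearing in Definition~\ref{def-ooc-curve} are only distorted by bounded factors. Taking $y=A$ in the displayed inequality gives $c\,\|x-A\|\le\|\phi(x)-A'\|\le C\,\|x-A\|$, hence $\|\phi(x)-A'\|^l$ is comparable to $\|x-A\|^l$ with constants $c^l$ and $C^l$. For the numerator I would write a typical point of $\gamma_2'$ as $\phi(y)$ with $y\in\gamma_2$, apply the bi-Lipschitz bounds to $\|\phi(x)-\phi(y)\|$, and pass to the infimum, obtaining
\[
c\,d(x,\gamma_2)\le d(\phi(x),\gamma_2')\le C\,d(x,\gamma_2).
\]

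The step requiring genuine care — and the one I expect to be the main obstacle — is justifying that these infima may be taken over the local pieces $\gamma_2\cap U$ and $\gamma_2'\cap\phi(U)$, so that the bi-Lipschitz constants actually apply to the (near-)minimizing points. This is a localization argument: since $d(x,\gamma_2)\le\|x-A\|\to 0$, any near-minimizer $y$ satisfies $\|y-A\|\le 2\|x-A\|$ and therefore lies in $U$ once $x$ is close enough to $A$; the same reasoning on the image side, using that $\phi$ is a homeomorphism carrying $\gamma_2\cap U$ onto $\gamma_2'\cap\phi(U)$ and that $d(\phi(x),\gamma_2')\le\|\phi(x)-A'\|\to 0$, confines the minimizers for $d(\phi(x),\gamma_2')$ to $\phi(U)$. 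With this in hand the two displayed comparisons are legitimate for all $x\in\gamma_1$ sufficiently close to $A$.

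Combining the numerator and denominator estimates yields, for such $x$,
\[
\frac{c}{C^l}\cdot\frac{d(x,\gamma_2)}{\|x-A\|^l}
\ \le\ \frac{d(\phi(x),\gamma_2')}{\|\phi(x)-A'\|^l}
\ \le\ \frac{C}{c^l}\cdot\frac{d(x,\gamma_2)}{\|x-A\|^l}.
\]
Since the middle factor $d(x,\gamma_2)/\|x-A\|^l$ lies between $m$ and $M$ by hypothesis, the ratio on $\gamma_1'$ lies between the positive constants $\tfrac{c}{C^l}m$ and $\tfrac{C}{c^l}M$, uniformly in $x$. Because $\phi$ maps a neighborhood of $A$ in $\gamma_1$ onto a neighborhood of $A'$ in $\gamma_1'$, every point of $\gamma_1'$ near $A'$ is of the form $\phi(x)$, so Definition~\ref{def-ooc-curve} identifies the order of contact of $\gamma_1'$ with $\gamma_2'$ at $A'$ as precisely $l$. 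By Proposition~\ref{symmetry} this number is symmetric in the two curves, so ``the order of contact'' is unambiguously preserved.
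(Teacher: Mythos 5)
Your proof is correct and follows essentially the same route as the paper's: both arguments reduce the claim to the fact that $\phi$ is locally bi-Lipschitz near $A$ (the paper via the $C^1$ Mean Value Theorem and the convergence of $\int_0^1 (D\phi)_{\sigma(s)}\,ds$ to the invertible matrix $(D\phi)_A$). You supply details the paper leaves implicit — the two-sided comparison of $d(\phi(x),\gamma_2')$ with $d(x,\gamma_2)$ and the localization of near-minimizers to the neighborhood where the bi-Lipschitz constants apply — but the underlying idea is identical.
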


\begin{proof}{}
Let curves $\gamma_1$ and $\gamma_2$ have order of contact $l$ at some point $A$.
Then there are positive constants $m$ and $M$ such that for all points
$x\in\gamma_1$ sufficiently close to $A$
 \[
m\leq \frac {\Vert x - y\Vert }{\| x - A\|^{l}}
 \leq M,
 \]
where $y=y(x)$ denotes a point on $\gamma_2$ minimizing the distance from $x$
to $\gamma_2$. By the $C^1$ Mean  Value Theorem
 applied to the diffeomorphism $\phi$,
 \[
 \phi (x)- \phi (y) = \Bigg[  \int_{0}^{1}(D \phi )_{\sigma (s)} ds \Bigg] (x-y),
 \]
where $\sigma:[0,1]\to{\mathbb R}^n$ is a path connecting the points
$x=\sigma(0)$ and $y=\sigma(1)$. As $x \to A$, $y(x) \to A$. Then, $\sigma (s)
\to A$, and the matrix $ \int_{0}^{1}(D \phi )_{\sigma (s)} ds $ tends to the
invertible matrix $ (D \phi )_A $, hence the claim.
\end{proof}

Let us now pass to a discussion of the order of contact for higher dimensional
intersecting manifolds.

\begin{definition}\label{def-ooc-manifold} \rm
Let $S_1$ and $S_2$ denote two immersed $C^1$-manifolds in ${\mathbb{R}}^n$.
Suppose the two manifolds meet at an isolated point $A$. Their
{\it order of
contact} $l$ at $A$ is the supremum of the orders of contact of curves
$\gamma_1\subset S_1$ and $\gamma_2\subset S_2$ passing through point $A$.
\end{definition}

As it follows from Lemma~\ref{preserv-ooc-lemma}, the order of contact is
preserved under $C^1$-diffeomorphisms.

\bigskip

The well known ${\lambda}$-Lemma of Palis \cite{Pa} gives an important
description of chaotic dynamics. The basic assumption of this theorem is the
presence of a transverse homoclinic point, which means that the manifolds have
contact of order 1.

\begin{theorem}[$\lambda$-Lemma, Palis]\label{Palis}
Let $f$ be a diffeomorphism of ${\mathbb{R}}^n$ with a hyperbolic fixed
point at $0$ and $m$- and $p$-dimensional stable and unstable manifolds $W_S$
and $W_U$ ($m+p=n$). Let $D$ be a $p$-disk in $W_U$, and $w$ be another
$p$-disk in $W_U$ meeting $W_S$ at some point $A$ {\em transversely}. Then
$\bigcup_{n\geq 0} f^n(w )$ contains $p$-disks arbitrarily $C^1$-close to $D$.
\end{theorem}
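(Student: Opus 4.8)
The plan is to prove this inclination (or $\lambda$-) lemma by tracking the tangent planes of the iterated disks as they are carried toward the fixed point along $W_S$ and then stretched out along $W_U$. First I would set up adapted coordinates: by the stable/unstable manifold theorem I choose a $C^1$ chart near $0$ in which the local stable and unstable manifolds coincide with the coordinate subspaces $E^s=\mathbb{R}^m\times\{0\}$ and $E^u=\{0\}\times\mathbb{R}^p$, and in which $Df(0)$ is block-diagonal $\mathrm{diag}(A,B)$ with $\|A\|\le a<1<b\le\|B^{-1}\|^{-1}$. Writing $f(x,y)=(f_1(x,y),f_2(x,y))$ with $x\in E^s$ and $y\in E^u$, invariance of the coordinate planes together with continuity of $Df$ yields, on a small box $U$ about $0$, uniform estimates: $\|\partial_x f_1\|$ near $\|A\|$, $\|\partial_y f_2^{-1}\|$ near $\|B^{-1}\|$, and off-diagonal blocks $\partial_y f_1,\ \partial_x f_2$ as small as desired.

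The core is a cone/inclination estimate. I consider $p$-planes that are graphs over $E^u$, i.e. of the form $\{(Sv,v):v\in E^u\}$ for a linear map $S\colon E^u\to E^s$ with $\|S\|\le 1$; call these the horizontal cone. A direct computation with the block form of $Df$ shows that $Df$ sends such a plane to another graph over $E^u$ whose slope $S'$ obeys an estimate of the shape $\|S'\|\le\kappa\|S\|+\varepsilon$, with $\kappa\approx\|A\|\,\|B^{-1}\|<1$ and $\varepsilon$ governed by the (small) off-diagonal blocks and shrinkable by taking $U$ small. Hence the horizontal cone is forward-invariant and the slope of any horizontal plane decays geometrically toward $0$ under iteration, as long as the plane stays in $U$.

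Next I feed the disk into this mechanism using the hypotheses. Since $A\in W_S$ we have $f^n(A)\to 0$, so some $f^N(A)$ lies in $U$. Transversality of $w$ to $W_S$ at $A$ means $T_A w$ is complementary to $T_A W_S$; propagating by $Df^N$, which preserves $W_S$ and its tangent, shows that a small subdisk of $f^N(w)$ about $f^N(A)$ is a graph over $E^u$, hence lies in the horizontal cone. Applying the previous step repeatedly, the tangent planes of the forward images flatten, $\|S\|\to 0$, so in $C^1$ the images become horizontal; meanwhile the expansion by $B$ stretches the $y$-coordinate outward, so the images grow to cover a full local $p$-disk in $W_U^{loc}$ and converge in $C^1$ to $W_U^{loc}$. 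Finally, given the target disk $D\subset W_U$, I fix $k$ with $f^{-k}(D)$ inside $W_U^{loc}$; since $f^k$ is a $C^1$ diffeomorphism, applying it to the horizontal disks that are $C^1$-close to $f^{-k}(D)$ produces disks in $\bigcup_{n\ge 0}f^n(w)$ that are $C^1$-close to $D$.

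The main obstacle I anticipate is the simultaneous $C^0$ and $C^1$ bookkeeping in the last step: the stable contraction shrinks the disk transversally while the unstable expansion must be shown to enlarge it enough to cover $D$, and the inclination estimate must drive the slopes to zero uniformly over the growing disk, not merely at its center. Making these two competing effects quantitative, so that one genuinely obtains disks that are both large enough and flat enough, is where the real work lies; the hyperbolic rate gap $\|A\|\,\|B^{-1}\|<1$ is precisely what reconciles them.
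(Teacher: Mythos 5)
The paper does not actually prove Theorem~\ref{Palis}: it is quoted as the classical $\lambda$-Lemma with a citation to Palis \cite{Pa} and used as a known result, so there is no in-paper proof to compare against. Your argument is the standard textbook proof of that result --- adapted coordinates aligning $W_S^{loc}$ and $W_U^{loc}$ with coordinate subspaces, an invariant horizontal cone field, contraction of slopes at rate roughly $\|A\|\,\|B^{-1}\|<1$, transversality at $A$ to seed the cone after finitely many iterates, unstable expansion to cover a local $p$-disk, and a final fixed iterate $f^k$ to reach an arbitrary disk $D\subset W_U$ --- and it is correct in outline. One imprecision worth fixing: the recursion $\|S'\|\le\kappa\|S\|+\varepsilon$ drives the slope toward the fixed point $\varepsilon/(1-\kappa)$ of that affine contraction, not toward $0$; to obtain disks \emph{arbitrarily} $C^1$-close to $D$ you must first fix the desired closeness $\rho$, then shrink the neighborhood $U$ so that $\varepsilon/(1-\kappa)<\rho$ (and so that the $C^0$ distance to $E^u$, which contracts at rate $\|A\|$, is also below $\rho$), and only then iterate. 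That, together with checking that the expanding dynamics really does make the flattened subdisks spread over all of the domain of $f^{-k}(D)$ rather than merely over a neighborhood of the orbit of $A$, is the quantitative bookkeeping you correctly identify at the end as the real work.
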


Later in this section we will prove an analog of this lemma (Lemma~\ref{lemma})
for non-transversal homoclinic intersections.

For the estimates in the proofs of the Singular $\lambda$-Lemma (i.e., in the
case when the order of contact is greater than 1) and the Main Theorem we need
the following definitions. The first one, the definition of
a graph portion, deals with the shape of a global invariant manifold in the
vicinity of a homoclinic point. This homoclinic point must be chosen close to a
hyperbolic reference point. Then, the local properties of the invariant
manifold are checked. The second definition describes a group of
diffeomorphisms that can be
locally represented (after a $C^1$ change of coordinates) in
some special normal form.

\begin{definition}\label{def-p-p} \rm
Let $f$ be a diffeomorphism of ${\mathbb{R}}^n$ with a hyperbolic fixed point
at the origin and $U \in$ ${\mathbb{R}}^n$ be some small neighborhood of $0$. Denote by $W_S$ (resp., $W_U$) the
associated stable (resp., unstable) manifold, and by $m$ (resp., $p$) its
dimension ($m+p=n$). Let $A$ be a homoclinic point of $W_S$ and $W_U$. Denote
by $\mathcal V$ a small $p$-neighborhood in $W_U$ around the origin. Define a
local coordinate system $E_1$ at $0$, which spans $\mathcal V$. Similarly,
define a local coordinate system $E_2$ which spans $W_S$ near $0$. Let $E = E_1
+ E_2$, $E \subset U$. A $p$-neighborhood $\Lambda \subset W_U$ is a
{\it graph portion} in
$U$, associated with the homoclinic point $A$, if for some $n > 0$ $f^n(A) \in
\Lambda$, $\Lambda \subset (U \cap W_U)$, and $\Lambda$ is a graph in
$E$-coordinates of some $C^1$-function defined on $\mathcal V$.
\end{definition}

\begin{figure}[th]
\begin{center}
\begin{psfrags}
\psfrag{L}[][]{$\Lambda$}
\psfrag{A}[][]{$A$}
\psfrag{0}[][]{$0$}
\psfrag{V}[][]{$\mathcal V$}
\psfrag{WS}[][]{$W_S$}
\psfrag{WU}[][]{$W_U$}
     \epsfxsize=4.0in\leavevmode\epsfbox{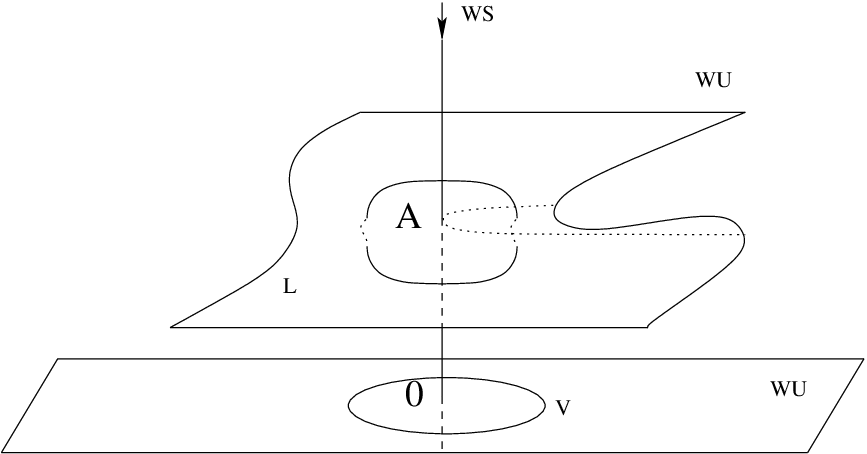}
\end{psfrags}
\end{center}
\caption{In this figure, $\Lambda$ is not a graph portion of the manifold $W_U$, because, for any $n$, an iteration of
$\Lambda$ with diffeomorphism $f^n$ is never a graph in the given coordinates.}
\label{fig_graph}
\end{figure}

\begin{definition} \label{defNormalForm} \rm
We will say that diffeomorphism $f$ with a hyperbolic fixed point $0$ satisfies
the {\it normal
form condition}, if there exists a local $C^1$ change of
coordinates in a neighbourhood of $0$,
under which $f$ takes the
following normal form: $f(x,y)=(S_1(x,y), S_2(x,y))$, where $S_1$ and $S_2$
belong to expanding and contracting subspaces respectively, and $S_2$
is linear, i.e $S_2(x,y) = {\mathcal B} y$, with $\Vert {\mathcal B} \Vert <1$.
\end{definition}

We are interested in the class of diffeomorphisms satisfying the normal form
condition described above. The next lemma
(Lemma~\ref{flattening}) allows us to ascertain belonging to this class by verifying certain
conditions (called resonances) on the eigenvalues of the linear part of the
map. The following two definitions are used in the lemma.

\begin{definition} \label{defPre5} \rm
Let $f$ be a diffeomorphism of ${\mathbb{R}}^n$ with
the linear part
$$( ( {\mathcal A} x )_1,\dots,( {\mathcal A}x )_p,( {\mathcal B} y )_1, \dots,( {\mathcal B} y )_m)
$$
at a hyperbolic fixed
point $0$, where $m$ and $p$ are the
dimensions of its stable and unstable manifolds, respectively.
Then,
$f$ satisfies the {\it mixed second order resonance condition} if there exist
$a \in \mathop{\rm spec}{\mathcal A}$ and $b \in
\mathop{\rm spec} {\mathcal B}$ with $a b \in (\mathop{\rm spec}
{\mathcal A}\cup \mathop{\rm spec} {\mathcal B}) $.
\end{definition}

\begin{definition} \label{def5} \rm
We will say that $f$ has
{\it mixed second order resonance in its contracting
coordinates} if there exist $a \in
\mathop{\rm spec}{\mathcal A}$, $b \in \mathop{\rm spec} {\mathcal
B}$ with $a b \in \mathop{\rm spec} {\mathcal B}$.
\end{definition}

\begin{lemma}\label{flattening}
If a $C^{\infty}$ diffeomorphism $f$ with a hyperbolic fixed point $0$ has no mixed second order resonances in the contracting
coordinates, then it satisfies the normal form condition.
\end{lemma}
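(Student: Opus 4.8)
The plan is to reduce the statement to a $C^1$ \emph{partial} linearization of the contracting block, for which the excluded mixed second order resonances turn out to be precisely the obstruction. First I would exploit the smoothness of $f$ to straighten the invariant manifolds: since $f$ is $C^\infty$, so are $W_S$ and $W_U$, and a $C^\infty$ change of coordinates carries $W_U$ onto the expanding subspace $\{y=0\}$ and $W_S$ onto the contracting subspace $\{x=0\}$. In these coordinates $f(x,y)=(S_1(x,y),S_2(x,y))$ with $S_1(0,y)=0$ and $S_2(x,0)=0$, and $DS_1(0)=\mathcal A$, $DS_2(0)=\mathcal B$, where $\mathop{\rm spec}\mathcal A$ and $\mathop{\rm spec}\mathcal B$ lie outside and inside the unit circle respectively. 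The task then reduces to producing a further $C^1$ change of coordinates, tangent to the identity, after which the contracting component becomes exactly $S_2(x,y)=\mathcal B y$; the precise form of $S_1$ is irrelevant.

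Second, I would look for the new contracting coordinate as a linearizing map $\Psi$ for the stable direction, i.e. a $C^1$ map with $\Psi(x,y)=y+\text{(higher order)}$ solving the functional equation $\Psi\circ f=\mathcal B\,\Psi$; then $(x,\Psi)$ is the desired chart. The natural way to construct $\Psi$ is to kill the Taylor terms of $S_2$ order by order through the homological equation $\mathcal B\,\phi(x,y)-\phi(S_1,S_2)=(\text{term to be removed})$, exactly as in the Sternberg--Belitskii scheme. A monomial $x^\alpha y^\beta$ in the $j$-th contracting component is formally removable precisely when $\prod_i a_i^{\alpha_i}\prod_k b_k^{\beta_k}\neq b_j$, where $a_i\in\mathop{\rm spec}\mathcal A$, $b_k\in\mathop{\rm spec}\mathcal B$ are the eigenvalues entering the monomial and $b_j\in\mathop{\rm spec}\mathcal B$.

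Third comes the resonance bookkeeping, which is where the hypothesis is used. Pure expanding terms ($\alpha\neq0=\beta$) are never resonant, since $|\prod_i a_i^{\alpha_i}|>1>|b_j|$. Pure contracting resonances $\prod_k b_k^{\beta_k}=b_j$ can certainly occur, but they do \emph{not} obstruct a $C^1$ conjugacy: the resonant monomial can be absorbed by a logarithmic coordinate change (for instance, when $b_kb_{k'}=b_j$ the homological equation is solved by a term proportional to $y_ky_{k'}\log|y|$, which is $C^1$, though not $C^2$). The genuine $C^1$-obstruction is the cross term of order two, $x_iy_k$, removable exactly when $a_ib_k\neq b_j$; this is the statement that $f$ has no mixed second order resonance in its contracting coordinates (Definition~\ref{def5}), which is our hypothesis. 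Higher order mixed terms and the $C^\infty$ remainder left after finitely many normalization steps create no further obstruction at the $C^1$ level: as in Belitskii's $C^1$-linearization theory (and in \cite{R}), only the order-two cross resonances matter for $C^1$ regularity, and the remainder is controlled by the contraction estimates coming from hyperbolicity.

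Assembling these steps yields a $C^1$ change of coordinates putting $f$ in the form $(S_1(x,y),\mathcal B y)$, and a linear adjustment of the $y$-coordinates (passing to an adapted norm) gives $\|\mathcal B\|<1$, which is exactly the normal form condition of Definition~\ref{defNormalForm}. The main difficulty is the third step: establishing convergence and $C^1$ regularity of the linearizing map $\Psi$ uniformly down to the resonant directions. Controlling the quadratic cross terms $x_iy_k$ is precisely where the no-mixed-resonance hypothesis is indispensable, and the delicate point is to verify that the logarithmic corrections forced by same-side resonances, together with the higher-order remainder, really assemble into a genuinely $C^1$ (rather than merely Lipschitz or topological) coordinate change. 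This is the analytic heart of the argument and parallels the $C^1$ linearization results of Belitskii and \cite{R}.
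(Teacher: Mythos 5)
Your outline identifies the right obstruction (the mixed quadratic terms $x_iy_k$ in the contracting block) and the right target (linearize only the contracting component), but it takes a genuinely different route from the paper and leaves a real gap at the point you yourself flag as the analytic heart. The paper does not run a term-by-term homological-equation scheme at all. It invokes Samovol's theorem as packaged in Bronstein--Kopanskii: one checks that every monomial $cx^ky^l$ with $|k|>1$ or $|l|>1$ satisfies the modulus condition $S(1)$ (for $|k|>1$, after ordering the expanding eigenvalues, ${\mathcal A}_1^{k_1}\cdots{\mathcal A}_j^{k_j}>{\mathcal A}_j$ simply because each expanding eigenvalue exceeds $1$), whence $S(1)\Rightarrow{\mathcal A}(1)$ and Samovol's theorem gives a single $C^1$ conjugacy to the quadratic polynomial retaining only the mixed terms $x_iy_j$; the hypothesis of no mixed second order resonance in the contracting coordinates then lets the same theorem delete those terms from the contracting block. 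Note that $S(1)$ is an inequality between moduli of eigenvalue products, not an exact resonance equation, so pure contracting resonances such as $b_kb_{k'}=b_j$ never enter the paper's argument at all: those monomials have $|l|>1$ and are removed by the same mechanism as the cubic and higher terms.

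This is where your proposal is weaker. The assertion that ``only the order-two cross resonances matter for $C^1$ regularity'' is precisely the theorem you need, not something you may assume; it is the content of Samovol's result and requires verifying a quantitative spectral condition, which your sketch never does. Moreover your logarithmic device, while it formally solves the homological equation for an isolated quadratic same-side resonance, would have to be applied at every resonant order (same-side resonances of arbitrarily high order can occur), and you give no control of the $C^1$ norms of the accumulated corrections, nor of the flat remainder left after the finite normalization. If you replace your third step by the verification of the $S(1)$ condition for all terms with $|k|>1$ or $|l|>1$ and a citation of Samovol's theorem (Bronstein--Kopanskii, Theorem 10.1), the logarithms become unnecessary and the argument closes along the paper's lines.
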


\begin{remark}\rm
The $C^\infty$ assumption in the above Lemma~\ref{flattening} can be replaced with a $C^k$ assumption,
where $k$ depends on the spectrum of the linear part, like in Bronstein and Kopanskii~\cite{BK} (Theorem 11.9).
As this dependence is a bit complicated we
assume $C^\infty$.
\end{remark}

\begin{proof}{}
Let $x=(x_1,\dots,x_p)\in {\mathbb{R}}^p$, $y=(y_1,\dots,y_m)\in
{\mathbb{R}}^m$ ($p+m=n$) and $f(x,y): {\mathbb{R}}^n \to {\mathbb{R}}^n $ have the linear part
$$
(({\mathcal A}x )_1,\dots,({\mathcal A}x )_p,({\mathcal B}y )_1,
\dots,( {\mathcal B}y )_m),
$$
 with $\| {\mathcal A}^{-1}\|$,
$\| {\mathcal B} \| < {\lambda} <1$.

First, we will establish conjugation between $f$ and the following normal form:
$$
{\mathcal A}x + \left(
\sum_{{i=1,\dots,p;}\atop{j=1,\dots,m}}a_{ij}^1 x_i
y_j,\dots , \sum_{{i=1,\dots,p;}\atop{j=1,\dots,m}}a_{ij}^p
x_i y_j \right) ,
$$
$$
{\mathcal B}y + \left(
\sum_{{i=1,\dots,p;}\atop{j=1,\dots,m}}b_{ij}^1 x_i
y_j,\dots ,
\sum_{{i=1,\dots,p;}\atop{j=1,\dots,m}}b_{ij}^m
x_i y_j \right) .
$$
Conjugation of $f$ with this quadratic polynomial can be constructed
$C^1$-smooth, because any term of degree higher than 2 in the polynomial expansion, of a
$C^\infty$ diffeomorphism with a
hyperbolic fixed point satisfies the $S(1)$
condition defined by Bronstein and Kopanskii~\cite{BK}
(Definition 7.4, page 110).

Suppose that the polynomial expansion of $f$ contains a term $c
x^k y^l$ and at least one of the two inequalities holds:
$|k|>1$, $|l|>1$. For
definiteness let us assume that
$|k|>1$ (where $|k|$
denotes the length of the multiindex $k$). Also, we can assume that the
eigenvalues are ordered: ${\mathcal A}_1 \leq \dots \leq {\mathcal A}_p$. Let
$k=(k_1, \dots, k_p)$ and $j = \max\{i: k_i \neq 0\}$. Then,
$$
{\mathcal A}_1^{k_1}\cdots {\mathcal A}_j^{k_j} >
{\mathcal A}_j,
$$
i.e. $c x^k y^l$ satisfies the $S(1)$ condition.

Remark 7.6 in Bronstein and Kopanskii~\cite{BK} (page 111) asserts that $S(1)
\implies {\mathcal A}(1)$. Then, Samovol's theorem (\cite{BK}, Theorem 10.1,
page 179) implies $C^1$-conjugation of $f$ with
the quadratic polynomial.

Moreover, since $f$ has no mixed second order resonances in
its contracting coordinates, by Samovol's theorem it can
actually be $C^1$-conjugated to
\begin{align*}
{\mathcal A}x& + \left(
\sum_{{i=1,\dots,p;}\atop{j=1,\dots,m}}a_{ij}^1 x_i
y_j,\dots , \sum_{{i=1,\dots,p;}\atop{j=1,\dots,m}}a_{ij}^p
x_i y_j \right),
\\
{\mathcal B}y.
\end{align*}
\end{proof}

\bigskip

Obviously, the conclusion of the $\lambda$-Lemma of Palis is not true for an
arbitrary degenerate (non-transverse) crossing (see, for example,
figure~\ref{newhousePic}).

We will now prove an analog of the ${\lambda}$-Lemma for the non-transverse
case in ${\mathbb{R}}^n$.

\begin{lemma}[Singular $\lambda$-Lemma]\label{lemma}
Let $f$ be a $C^2$ diffeomorphism of ${\mathbb{R}}^n$ with a hyperbolic fixed
 point at $0$ and with $m$- and $p$-dimensional stable and unstable manifolds
$W_S$ and $W_U$ ($m+p=n$), respectively. Assume that $f$
satisfies the normal form condition and there exists $\Lambda
\subset W_U$, a graph portion in a
small neighborhood of $0$, associated with some degenerate homoclinic point $A$
(cf. Definition~\ref{def-p-p}). Also assume that $l$, the
order of contact at $A$, is finite ($1 < l <\infty$).

Then, for any $\rho >0$, for an arbitrarily
 small $\epsilon$-neighborhood ${\mathcal U} \subset
{\mathbb R}^n$ of the origin and for the graph portion
${\Lambda}$, the set $(\bigcup_{n\geq 0} f^n({\Lambda}
))\setminus {\mathcal U}$ contains disks $\rho$-$C^1$ close to ${\mathcal V}
\setminus {\mathcal U}$ (here ${\mathcal V}$ is a small neighborhood of $0$ in
the local unstable manifold).
\end{lemma}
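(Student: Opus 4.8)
The plan is to run the classical $\lambda$-Lemma argument — track both the positions and the tangent planes of the forward iterates of $\Lambda$ — but to feed the finite order of contact into the tangent-plane estimate so that it still beats the hyperbolic contraction. First I pass to the $C^1$ normal-form coordinates furnished by the normal form condition, writing $f(x,y)=(S_1(x,y),\mathcal B y)$ with $x\in\mathbb R^p$ expanding, $y\in\mathbb R^m$ contracting, $S_1(x,y)=\mathcal A x+O(\|x\|\,\|y\|)$, and $\|\mathcal A^{-1}\|,\|\mathcal B\|\le\lambda<1$. In these coordinates $W_S^{\mathrm{loc}}=\{x=0\}$ and $W_U^{\mathrm{loc}}=\{y=0\}\supset\mathcal V$. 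Since the order of contact is a $C^1$-invariant (Lemma~\ref{preserv-ooc-lemma}), the degenerate point carried into $\Lambda$, call it $B=(0,y_B)$, still has contact $l$ with $W_S$. I parameterize $\Lambda$ intrinsically as $s\mapsto(\xi(s),\eta(s))$ on a $p$-disk, with $\xi(0)=0$, $\eta(0)=y_B$. Tangency forces $D\xi(0)$ to drop rank, and Definitions~\ref{def-ooc-curve}--\ref{def-ooc-manifold} together with the order-$l$ hypothesis translate into $\|\xi(s)\|\sim\|s\|^{l}$ along the degenerate (kernel) direction while $\eta$ stays regular there; hence the smallest singular value of $D\xi(s)$ is of size $\|s\|^{\,l-1}$ and $\|D\xi(s)^{-1}\|\le C\|s\|^{-(l-1)}$.

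Next I iterate. Because the contracting block is exactly linear, $f^N(\xi(s),\eta(s))=(X_N(s),\mathcal B^N\eta(s))$, so the stable coordinate is uniformly $O(\lambda^N)$ and the iterates converge in $C^0$ to $\{y=0\}$. To leading order $X_N(s)=\mathcal A^N\xi(s)$, the mixed quadratic corrections of $S_1$ being controlled by the smallness of the neighborhood through the usual cone condition; so for a target $x_0$ with $\epsilon\le\|x_0\|\le R$ the equation $X_N(s)=x_0$ is solved by an $s$ with $\|\xi(s)\|\sim\lambda^N\|x_0\|$, i.e.\ $\|s\|\sim(\lambda^N\|x_0\|)^{1/l}$. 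As $x_0$ ranges over $\mathcal V\setminus\mathcal U$ these solutions sweep out a $p$-disk of $f^N(\Lambda)$ lying over the annulus, which supplies the $C^0$ part of the conclusion.

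The crux is the $C^1$ estimate. Writing the image as a graph over the unstable subspace (legitimate for $s\ne0$, since $\partial_sX_N\approx\mathcal A^N D\xi(s)$ is invertible away from the tangency), its slope is $\mathcal B^N D\eta(s)\,(\partial_sX_N)^{-1}\approx\mathcal B^N D\eta(s)\,D\xi(s)^{-1}\mathcal A^{-N}$. Inserting $\|D\xi(s)^{-1}\|\le C\|s\|^{-(l-1)}$ with $\|s\|\sim(\lambda^N\|x_0\|)^{1/l}$ gives
\[
\|\,\mathrm{slope}\,\|\ \le\ C\,\lambda^{N}\,\|s\|^{-(l-1)}\,\lambda^{N}\ \sim\ C'\,\lambda^{\,N(1+1/l)}\,\|x_0\|^{-(l-1)/l}.
\]
For $\|x_0\|\ge\epsilon$ the last factor is bounded by $\epsilon^{-(l-1)/l}$, and since $\lambda^{1+1/l}<1$ the slope tends to $0$ uniformly on the annulus as $N\to\infty$; this is exactly $C^1$-closeness of $f^N(\Lambda)$ to $\mathcal V\setminus\mathcal U$. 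The very same bound blows up like $\|x_0\|^{-(l-1)/l}$ as $x_0\to0$, which is precisely why a neighborhood $\mathcal U$ of the origin must be excised.

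I expect the tangent-plane step to be the main obstacle. Two things need care: extracting the full matrix bound on $D\xi(s)^{-1}$ — not merely its worst direction — from the generalized, distance-ratio order of contact, and absorbing the nonlinear mixed terms of $S_1$ so that the linear leading-order slope computation is valid; it is here that the $C^2$ hypothesis enters, to run a $C^1$ cone-field argument along the orbit. What makes the whole scheme succeed is the \emph{finiteness} of $l$: it forces the degeneracy of $D\xi$, and hence the slope, to blow up only polynomially in $\|s\|$, and this polynomial growth is dominated by the exponential contraction $\lambda^{N(1+1/l)}$.
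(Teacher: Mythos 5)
Your proposal follows essentially the same route as the paper: pass to the normal-form coordinates with a linear contracting block, re-graph $f^N(\Lambda)$ over the unstable subspace, and bound the slope of the re-graphed iterate by (contraction) $\times$ (tangency-induced derivative blow-up) $\times$ (inverse expansion), arriving at the same decay rate $\lambda^{N(1+1/l)}\|x_0\|^{-(l-1)/l}$ and the same reason for excising a neighborhood of the origin. The only cosmetic difference is that you parameterize $\Lambda$ intrinsically with $\|\xi(s)\|\sim\|s\|^{l}$, whereas the paper writes $\Lambda$ directly as a graph $y=A+r(x)$ with $|r(x)|\lesssim |x|^{1/l}$ and $|\partial r/\partial x_i|\lesssim |x|^{1/l-1}$ (inverse descriptions of the same degeneracy), and the two delicate points you flag --- extracting the derivative bound from the distance-ratio definition of contact, and absorbing the nonlinear mixed terms of $S_1$ --- are treated in the paper in exactly the same spirit (the former is simply assumed, the latter via the estimate $\|T_1^{-n}\|_{C^1}\le(\|\mathcal A^{-1}\|+K\Delta)^n$).
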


\begin{figure}[th]
\begin{center}
\begin{psfrags}
\psfrag{L}[][]{$\Lambda$}
\psfrag{f}[][]{$f(\Lambda)$}
\psfrag{ff}[][]{$f^2(\Lambda)$}
\psfrag{A}[][]{$A$}
\psfrag{f(A)}[][]{$f(A)$}
\psfrag{f(f(A))}[][]{$f^2 (A)$}
\psfrag{V}[][]{$\mathcal V$}
\psfrag{S}[][]{$W_S$}
\psfrag{U}[][]{$W_U$}
\psfrag{X}[][]{$X$}
\psfrag{Y}[][]{$Y$}
\psfrag{o}[][]{$0$}
     \epsfxsize=3.0in\leavevmode\epsfbox{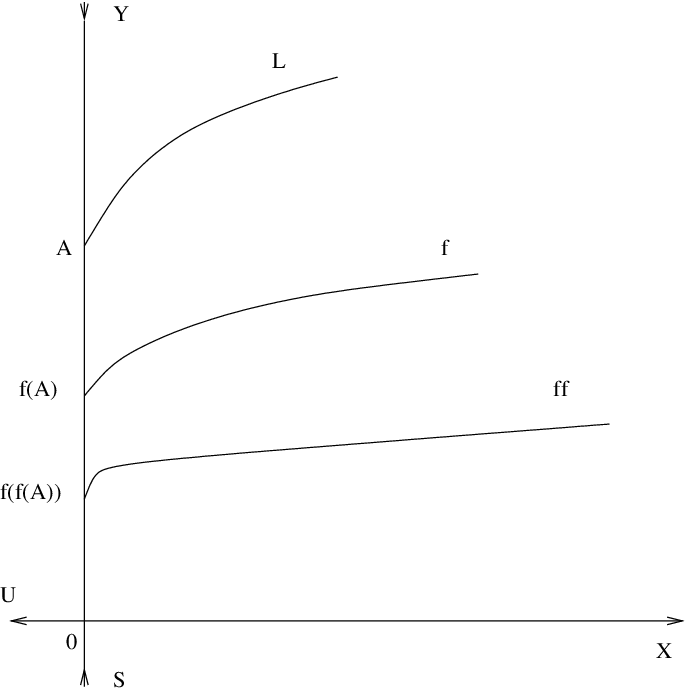}
\end{psfrags}
\end{center}
\caption{Iterations of the graph portion $\Lambda$ with the diffeomorphism $f$.}
\label{lemmapict}
\end{figure}

\begin{proof}{}
Let $\alpha = 1/l$ ($0 < \alpha <1$). Since ${\Lambda}$ is a graph portion that
has order of contact $l$ with $W_S$ at point $A$, we can assume that locally
${\Lambda}$ is the
graph of a function
\[
{\Lambda}(x) = A+r(x):{\mathbb{R}}^p \to {\mathbb{R}}^n, \quad r(0)=0,
\]
and for any sufficiently small $\sigma >0$
\[
 | r(x)| \leq \mathop{\rm const} \cdot | x|^\alpha\quad \mbox{and}\quad
 \left|  \di r(x) \right|
 \leq \mathop{\rm const} \cdot | x|^{\alpha -1}\
\]
for all $| x| < \sigma$, $ i=1,\dots,p$. Let $x=(x_1,\dots,x_p)\in {\mathbb{R}}^p$,
$y=(y_1,\dots,y_m)\in {\mathbb{R}}^m$ ($p+m=n$) and $f(x,y): {\mathbb{R}}^n \to
{\mathbb{R}}^n $ have the linear part
$$
(({\mathcal A}x )_1,\dots,({\mathcal A}x )_p,({\mathcal B}y )_1,
\dots,( {\mathcal B}y )_m).
$$
Assume that $\| {\mathcal A}^{-1}\|$, $\| {\mathcal B} \| < {\lambda} <1$.
Choose an arbitrarily small $\Delta$. We know that locally $f$ can be written
in the form $f(x,y)=(S_1(x,y), S_2(x,y))$, where $S_2(x,y)$ is linear,
$S_2(x,y) = {\mathcal B} y$. Since $f$ is $C^2$, the existence of invariant
manifolds implies that, in appropriate coordinates and in a sufficiently small
neighborhood of the origin, $f$ can be written as
\begin{align*}
S_1(x,y)= {\mathcal A}x& +
\left(
\sum_{i=1,\dots,p}x_i
U_i^1 (x,y),\dots ,
\sum_{i=1,\dots,p}x_i U_i^p (x,y)
\right),\\
S_2(x,y)= {\mathcal B} y,
\end{align*}
with $U(0) = 0$, $\| U\|_{C^0}\leq \Delta$ (for an arbitrary small $\Delta$), and
$\| U\|_{C^1}$ bounded.

Consider $f(x, {\Lambda}(x))=(T_1^{\Lambda}(x),T_2^{\Lambda}(x))$. We will work
with $(x,T_2^{\Lambda} \circ (T_1^{\Lambda})^{-1}(x))$ and deduce that $f^n(x,
{\Lambda}(x))$ is $C^1$-small for $n$ big enough, $|x|\in (\epsilon , \sigma
)$, and $\sigma >0$ sufficiently small. First we will show that, in
$C^1$-topology, $(T_1^{\Lambda})^{-1}$ is $\Delta$-close to ${\mathcal
A}^{-1}$. For simplicity, we will denote $T_1^{\Lambda}$ by $T_1$ and
$T_2^{\Lambda}$ by $T_2$. Then
\[
T_1(x)=
{\mathcal A}x + \left( \sum_{i=1,\dots,p} x_i U_i^1 (x,{\Lambda}(x)),\dots, \sum_{i=1,\dots,p} x_i U_i^p (x,{\Lambda}(x))
\right).
\]

\begin{claim}\label{claim_in}
\[
\bigl\| T_1^{-n}\bigr\|_{C^1} \leq \left( \bigl\| A^{-1}\bigr\|_{C^1} + K\cdot\Delta\right)^n
\] for $| x | < \sigma$  ($\sigma >0 $ sufficiently small, $K >0$).
\end{claim}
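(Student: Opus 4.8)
The plan is to prove the claim in three moves: a one-step estimate $\|D(T_1^{-1})\|\le\|\mathcal A^{-1}\|+K\Delta$ valid uniformly on a small ball $\{|x|<\sigma\}$, the observation that $T_1^{-1}$ carries this ball into itself, and finally iteration via the chain rule. Throughout I read the relevant $C^1$-quantity as the supremum over the ball of the operator norm of the differential; since $T_1$, and hence each $T_1^{-n}$, fixes the origin, the $C^0$-part is controlled by the derivative part through the Mean Value Theorem, and for the linear map $\mathcal A^{-1}$ one has $\|A^{-1}\|_{C^1}=\|\mathcal A^{-1}\|$. So it suffices to bound $\sup_x\|D(T_1^{-n})(x)\|$.

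First I would linearize $T_1$. Writing $T_1(x)=\mathcal Ax+Q(x)$ with $j$-th component $Q^j(x)=\sum_i x_i U_i^j(x,\Lambda(x))$, differentiation gives for the $(j,k)$ entry of $DQ$
\[
U_k^j(x,\Lambda(x))+\sum_i x_i\Big(\partial_{x_k}U_i^j+\sum_l \partial_{y_l}U_i^j\cdot \partial_{x_k}\Lambda_l\Big).
\]
The first term is bounded by $\|U\|_{C^0}\le\Delta$. In the remaining terms the prefactor $x_i$ is decisive: the genuinely dangerous pieces are those containing $\partial_{x_k}\Lambda_l$, which blows up like $|x|^{\alpha-1}$ near the degenerate tangency (here $\alpha=1/l$, exactly the estimate recorded earlier for $r$), yet multiplied by $|x_i|$ and using that $\|U\|_{C^1}$ is bounded (finite because $f$ is $C^2$) each such piece is $O(|x|^{\alpha})=O(\sigma^{\alpha})$, while the pieces with $\partial_{x_k}U_i^j$ are $O(\sigma)$. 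Hence $\|DT_1-\mathcal A\|=\|DQ\|\le\Delta+C\sigma^{\alpha}$ on $\{|x|<\sigma\}$, and shrinking $\sigma$ so that $C\sigma^{\alpha}\le\Delta$ gives $\|DT_1-\mathcal A\|\le 2\Delta$. Writing $DT_1=\mathcal A+E$ with $\|E\|\le 2\Delta$ and expanding $(\mathcal A+E)^{-1}=(I+\mathcal A^{-1}E)^{-1}\mathcal A^{-1}$ as a Neumann series (legitimate once $2\Delta\|\mathcal A^{-1}\|<1$) then yields $\|(DT_1)^{-1}-\mathcal A^{-1}\|\le K\Delta$, i.e. $\|D(T_1^{-1})\|\le\|\mathcal A^{-1}\|+K\Delta$ throughout the ball.

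Next I would observe that $Q(0)=0$ (the $x_i$ factors vanish) forces $T_1(0)=0$, so $T_1^{-1}$ fixes the origin, and since its Lipschitz constant is $\le\|\mathcal A^{-1}\|+K\Delta<1$ for $\Delta$ small (recall $\|\mathcal A^{-1}\|<\lambda<1$), it is a uniform contraction mapping $\{|x|<\sigma\}$ into itself. Consequently the whole backward orbit $y,T_1^{-1}(y),\dots,T_1^{-(n-1)}(y)$ remains inside the ball where the one-step bound holds. The chain rule $D(T_1^{-n})_y=\prod_{k=0}^{n-1}D(T_1^{-1})_{T_1^{-k}(y)}$ together with submultiplicativity of the operator norm then gives $\|D(T_1^{-n})_y\|\le(\|\mathcal A^{-1}\|+K\Delta)^n$ uniformly in $y$, which is the claim.

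The main obstacle is the one-step estimate, and precisely the control of $DQ$ near $x=0$. The graph portion $\Lambda$ is only $C^1$, and because the contact is degenerate of finite order $l>1$ its derivative is genuinely unbounded, growing like $|x|^{\alpha-1}$. What rescues the estimate is the structure imposed by the normal form condition (Definition~\ref{defNormalForm}): every nonlinear term of $S_1$ carries an explicit factor $x_i$, and $|x_i|\,|x|^{\alpha-1}\le|x|^{\alpha}\to 0$. This is exactly where the normal form hypothesis and the $C^2$ regularity (which guarantees $\|U\|_{C^1}$ bounded) are indispensable; without the $x_i$ factors the perturbation $DQ$ would fail to be small, and the contraction estimate underlying the whole claim would collapse.
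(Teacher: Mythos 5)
Your proof is correct and follows essentially the same route as the paper's: the heart of both arguments is the observation that each nonlinear term of $T_1$ carries an explicit factor $x_i$, so that the a priori unbounded derivative $\partial_{x_k}\Lambda_l = O(|x|^{\alpha-1})$ is tamed to $O(|x|^{\alpha})$, making the perturbation of $\mathcal A$ of size $O(\Delta)$ in $C^1$ on a small ball. You supply more detail than the paper on the final step (the Neumann-series bound for $(DT_1)^{-1}$, the invariance of the ball under $T_1^{-1}$, and the chain rule), which the paper simply asserts; these additions are correct and welcome.
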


\begin{proof}{}
Fix some $l\in \{1,\dots,p\}$. Recall that $\Lambda (x) = A + r(x)$.
\begin{align*}
\bigg| x_i \dl {\Lambda}_j(x) \bigg|
&\leq | x_i| \cdot
\bigg| \dl {\Lambda}_j(x) \bigg| \\
&\leq | x| \cdot O(1) | x|^{\alpha-1}\\
&\leq O(1)| x|^\alpha
\end{align*}
Through the proof of this Theorem, $O(1)$ will denote the set of functions
\begin{align*}
O(1) =& \big\{ \gamma : {\mathbb{R}} \mapsto
{\mathbb{R}}
 \mbox{, such that there exists a positive constant $c$ with }\\
&| \gamma (\zeta )| \leq c \mbox{ for all sufficiently small }\zeta \big\}
\end{align*}
Also,
\begin{align*}
|x|\cdot
\bigg| \dl
U_i^t(x,{\Lambda}(x)) \bigg| &=
|x|\cdot \bigg| \dl U_i^t(x,y) +
\sum_{k=1}^{m}
\dk U_i^t(x,y) \cdot \dl {\Lambda}_k(x) \bigg| \\
&= O(1)| x|^\alpha,
\end{align*}
and
$$
\big\| U (x,{\Lambda}(x))\big\|_{C^0} \leq \Delta.
$$

Therefore,
\begin{align*}
& \Bigg\|
\Bigg(
\sum_{i=1,\dots,p}x_i U_i^1
(x,{\Lambda}(x)),\dots ,\sum_{i=1,\dots,p}x_i U_i^p
(x,{\Lambda}(x))
\Bigg)
\Bigg\|_{C^1}\leq \Delta \cdot O(1),
\end{align*}
if $\sigma$ is sufficiently small and $| x| < \sigma$.
(An arbitrarily small $\Delta$ was chosen earlier). Then,
\[
\bigl\| T_1^{-n}\bigr\|_{C^1} \leq \left( \bigl\| A^{-1}\bigr\|_{C^1} + K\cdot\Delta\right)^n
\]
\end{proof}

Now, we continue the proof of Lemma~\ref{lemma} and show that $\|T_2^n \circ
T_1^{-n} \|_{C^1} $ is $\rho$-small.

\begin{align*}
\|T_2^n \circ T_1^{-n} \|_{C^1}
& = \|{\mathcal B}^n \cdot \Lambda (T_1^{-n})\|_{C^1}\\
&\leq \|{\mathcal B}\|^n \cdot \| \Lambda (T_1^{-n})\|_{C^1}\\
& = O(1)\cdot \| (T_1^{-n})^{\alpha -1}\|_{C^0}\cdot \| T_1^{-n}\|_{C^1}\\
& = O(1)\cdot \| T_1^{-n}\|_{C^0}^{\alpha -1}\cdot \| T_1^{-n}\|_{C^1}\\
& = O(1)\cdot \big(\| T_1^{-n}\|_{C^1} \| x \|\big)^{\alpha -1}\cdot \| T_1^{-n}\|_{C^1}\\
& = O(1)\cdot \| T_1^{-n}\|_{C^1}^{\alpha}\cdot \| x \|^{\alpha -1}.
\end{align*}
Applying Claim~\ref{claim_in} to the last expression, we obtain
\[
\|T_2^n \circ T_1^{-n} \|_{C^1} = O(1)\cdot \Big(\big\| A^{-1} \big\| + \Delta K \Big)^{\alpha n}\big\| x \big\|^{\alpha -1}.
\]
The last expression is small if $\big\| x \big\| > \epsilon$ and $n$ is big enough.
\end{proof}

\begin{figure}[th]
\begin{center}
\begin{psfrags}
\psfrag{q}[][]{$A$}
\psfrag{f(f(q))}[][]{$f^2 (A)$}
\psfrag{f(q)}[][]{$f(A)$}
\psfrag{p}[][]{$0$}
     \epsfxsize=2.55in\leavevmode\epsfbox{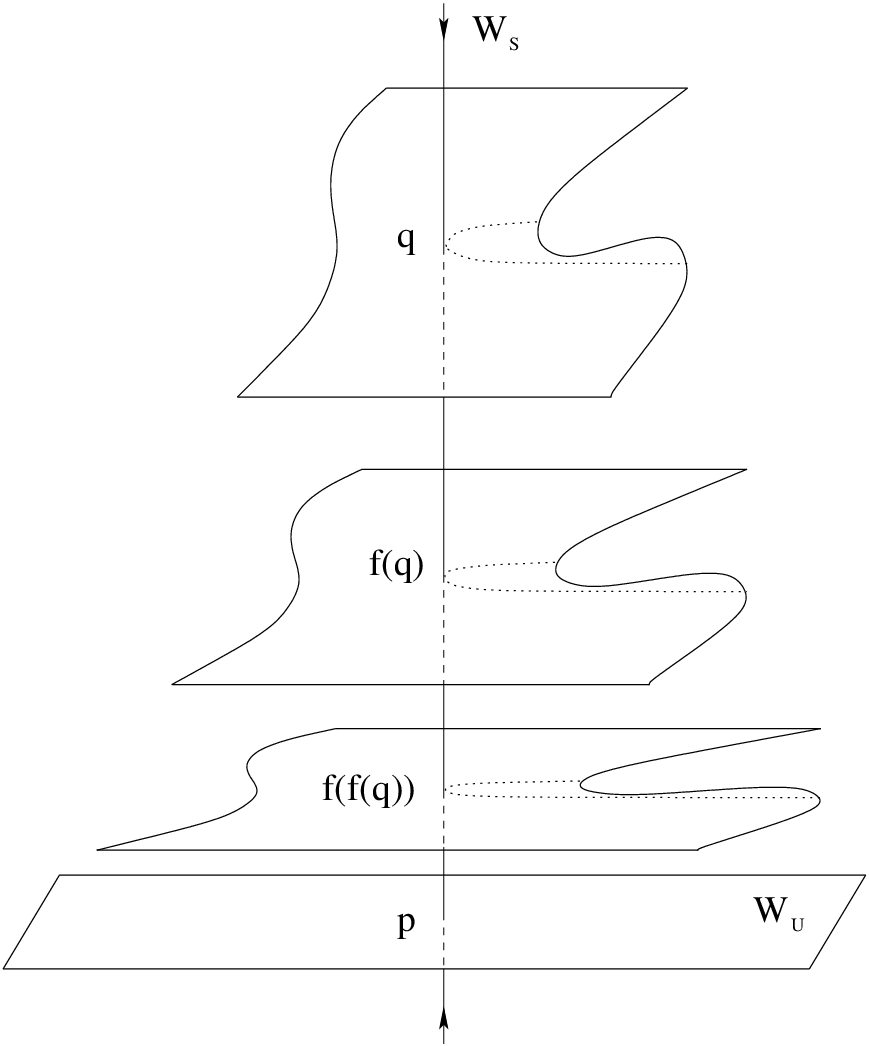}
\end{psfrags}
\end{center}
\caption{The iterated manifold is not a graph portion. The iterations of this manifold do not come $C^1$-close to the neighborhood of 0 in $W_U$.}
\label{lemma_counter_ex}
\end{figure}

\begin{remark}\rm
Clearly, if $W_U$ does not contain any graph portion $\Lambda$, the conclusion
of the $\lambda$-Lemma~\ref{lemma} is not true. Figure~\ref{lemma_counter_ex}
illustrates this situation.
\end{remark}

\section{The Main Theorem}\label{4}

In this section, we extend the results of Hirsch \cite{Hi},
Homburg and Weiss \cite{HW}, and  Gavrilov and Shilnikov
\cite{GS1}, \cite{GS2} to certain types of non-planar homoclinic tangencies with
finite orders of contact. Our goal is to establish a transversal crossing
arbitrarily close to the non-transversal one. Then, it would follow that there
exist horseshoes, located arbitrarily close to the degenerate crossing.

Consider a diffeomorphism $f$ on ${\mathbb{R}}^n$ which has $1$-dimensional
stable and \mbox{$(n-1)$}-dimensional unstable manifolds, and a homoclinic point $B$. Since the stable manifold is $1$-dimensional, we can obviously define one- and two-sided homoclinic intersections.

\begin{definition} \label{newhouseDef}\rm
We will say that $f$ has a homoclinic tangency of {\it Newhouse type}, if $f$
possesses only one-sided homoclinic tangencies.
\end{definition}
If $f$ possesses only one-sided homoclinic tangencies, then transversal
intersection need not exist in the neighborhood of $B$ even in the planar case.
The Newhouse example (Figure~\ref{newhousePic}) illustrates this situation. In
the planar case, this type of tangency (Newhouse type) was studied by Homburg
and Weiss \cite{HW}. Unfortunately, we can not extend their results to
${\mathbb{R}}^n$.

\begin{figure}[th]
\begin{center}
\begin{psfrags}
\psfrag{p}[][]{$p$}
\psfrag{q}[][]{$B$}
     \epsfxsize=2.8in\leavevmode\epsfbox{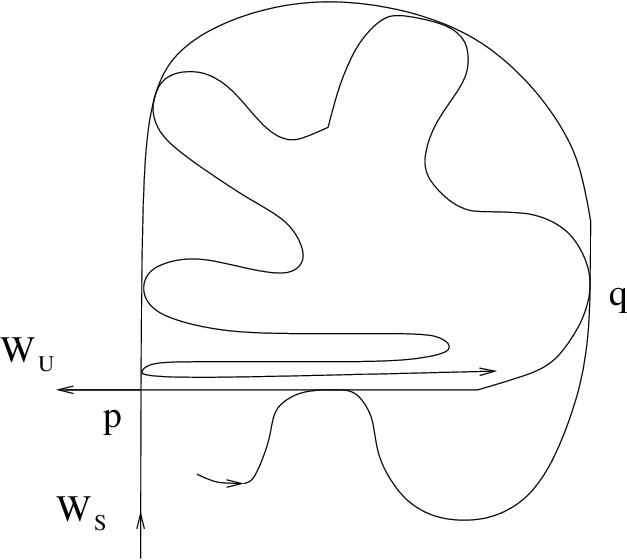}
\end{psfrags}
\end{center}
\caption{Newhouse example.}
\label{newhousePic}
\end{figure}

The other three cases of one-sided intersections in ${\mathbb{R}}^2$ were
studied by Gavrilov and Shilnikov, and Homburg and Weiss (Figure~\ref{types_of_crossing}, cases I, II, III). The case of two-sided intersection in ${\mathbb{R}}^2$ ((Figure~\ref{types_of_crossing}, case IV) was investigated by Hirsch and Chirchill and Rod.

We will consider, in higher dimensions, the situation similar to the planar cases I, II, III and IV, although we clearly have a different number of cases in ${\mathbb{R}}^n$. For example, cases I and II (with 1-dimensional stable, and 2-dimensional unstable manifolds) are the same in ${\mathbb{R}}^3$. But this is not important for our proof.

If a one-sided intersection is preceded or followed by a two-sided intersection
(see Figure~\ref{types_of_crossing}, cases I, II, III), then we can assume that
a two-sided intersection is arbitrarily close to a one-sided touching.
Consequently, it is enough to find a transversal crossing arbitrarily close to
a two-sided crossing.

\begin{figure}[th]
\begin{center}
\begin{psfrags}
\psfrag{1}[][]{case I}
\psfrag{2}[][]{case II}
\psfrag{3}[][]{case III}
\psfrag{4}[][]{case IV}
\psfrag{B}[][]{$B$}
     \epsfxsize=4.3in\leavevmode\epsfbox{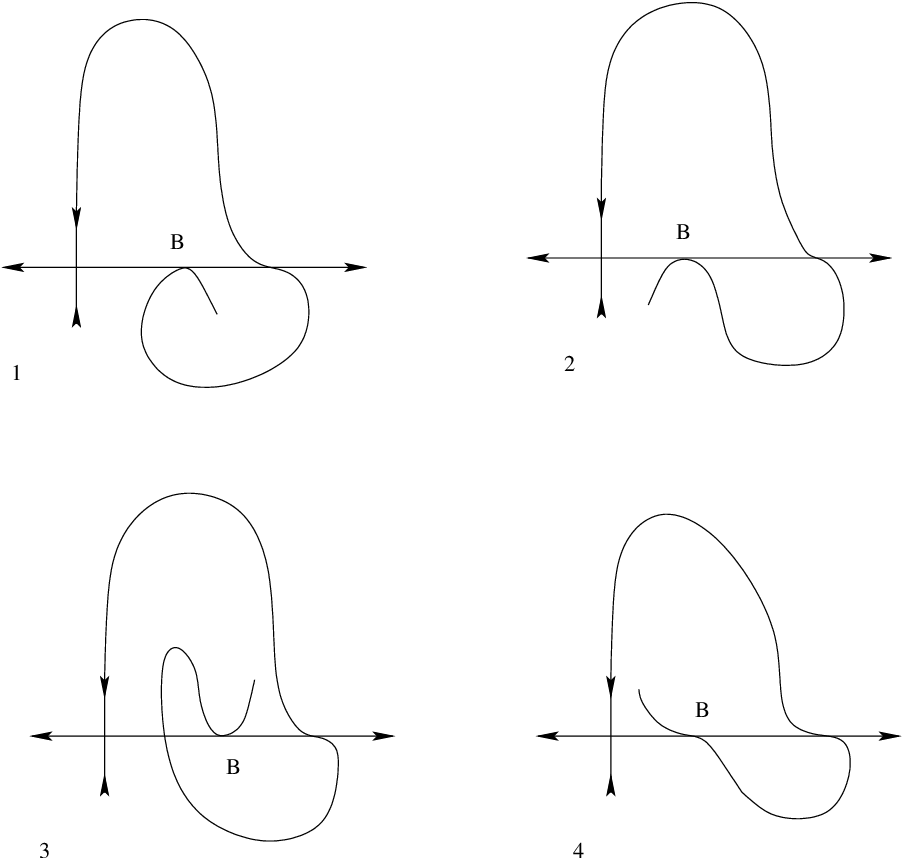}
\end{psfrags}
\end{center}
\caption{Two-sided intersections of invariant manifolds (non-Newhouse types).}
\label{types_of_crossing}
\end{figure}

\begin{theorem}\label{main-thm}
Let $M$ be a smooth manifold, and let $f:M \to M$ be a diffeomorphism with a
hyperbolic fixed point $p \in M$ and with stable and unstable invariant
manifolds $W_S$ and $W_U$ at $p$. Assume:
\begin{itemize}
\item[(i)] $f$ is locally $C^1$-linearizable in a neighborhood of $p$;
\item[(ii)] $W_U$ and $W_S$ have complementary dimensions $n-1$ and $1$;

\item[(iii)] $W_U$ and $W_S$ have a finite tangential contact at an isolated homoclinic point $B \in M$ ($B\neq p$),
which is not of Newhouse type;
\item[(iv)] $W_U$ contains a graph portion $\Lambda$ associated with $B$ in a small neighborhood, contained in the
neighborhood of linearization.
\end{itemize}
Then the invariant manifolds have a point of transverse intersection
arbitrarily close to the point $B$. Therefore, $f$ has a
horseshoe (arbitrarily close to $B$) according to the Birkhoff-Smale Theorem.
\end{theorem}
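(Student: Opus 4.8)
The plan is to reduce the statement to a spanning condition on tangent spaces and then verify that condition by a direct estimate in linearizing coordinates. First I would dispose of the geometric reductions. Since the contact at $B$ is not of Newhouse type (Definition~\ref{newhouseDef}), the configuration falls into one of the two-sided cases, and by the remark preceding the theorem it suffices to produce a transverse crossing arbitrarily close to a two-sided tangency; so I assume from the outset that $W_U$ crosses $W_S$ from one side to the other at $B$. Using hypothesis (i) I pass to $C^1$-linearizing coordinates near $p=0$, in which $W_S^{loc}$ is the $1$-dimensional contracting axis, $W_U^{loc}$ the $(n-1)$-dimensional expanding subspace, and $f$ coincides with its linear part $L=\mathrm{diag}(\mathcal A,\mathcal B)$ with $\|\mathcal A^{-1}\|,\|\mathcal B\|<1$. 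By Lemma~\ref{preserv-ooc-lemma} and Definition~\ref{def-ooc-manifold} the finite order of contact $l$ is unchanged. Because $f$ is linear here, the normal form condition holds trivially, so the graph portion $\Lambda$ of hypothesis (iv) meets the hypotheses of the Singular $\lambda$-Lemma~\ref{lemma}. Finally, since $\dim W_U+\dim W_S=n$, transversality at an intersection point $q$ is equivalent to the single spanning condition $T_qW_U+T_qW_S=\mathbb R^n$, i.e.\ to the stable direction $T_qW_S$ failing to lie in the hyperplane $T_qW_U$.

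Second, I would produce the intersection points and the two pieces of tangent data to be compared at them. Iterating $\Lambda$ forward, Lemma~\ref{lemma} yields disks $f^{n}(\Lambda)\subset W_U$ that are $C^1$-close to $\mathcal V\subset W_U^{loc}$ off a small neighborhood of the origin; thus these disks are nearly parallel to the expanding subspace. On the other hand the backward orbit $f^{-m}(B)\to 0$ runs into the origin along $W_U^{loc}$ and carries with it a tongue of the curve $W_S$ which, the order of contact being preserved, is tangent to the expanding subspace with the same order $l$. Two-sidedness of the crossing guarantees that this stable tongue pokes across to the side on which the flattened disks $f^{n}(\Lambda)$ accumulate, so that for suitable $n$ the $(n-1)$-disk $f^{n}(\Lambda)$ and the curve $W_S$ actually meet at a point $q_n$ lying near the homoclinic orbit of $B$. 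These $q_n$ form the sequence of intersection points at which I compute tangent spaces.

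Third comes the core estimate. Writing the stable tongue as a curve tangent to the expanding subspace and the disk $f^{n}(\Lambda)$ as a graph over that subspace, I would estimate at $q_n$ the tilt of $T_{q_n}W_S$ out of $W_U^{loc}$ and the tilt of $T_{q_n}W_U=T_{q_n}f^{n}(\Lambda)$ out of $W_U^{loc}$. Both tilts tend to $0$ as $n\to\infty$, but the point is that they decay at different exponential rates: the disk's tilt decays like $\|\mathcal B\|^{n}\,\|\mathcal A^{-1}\|^{\,n/l}$ (these are exactly the factors produced by the $C^1$ estimates behind Lemma~\ref{lemma} and Claim~\ref{claim_in}, with exponent $\alpha=1/l$), whereas the stable tongue's tilt at the crossing decays like $\|\mathcal B\|^{\,n(l-1)/l}$. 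The discrepancy between the two rates is governed by $\tfrac1l\log(\|\mathcal B\|\,\|\mathcal A^{-1}\|)$, which is strictly negative, so the two tilts are unequal for all large $n$; hence $T_{q_n}W_S\not\subset T_{q_n}W_U$ and the crossing at $q_n$ is transverse. Transversality is preserved by the diffeomorphism, so pulling $q_n$ back along the orbit places a transverse homoclinic point arbitrarily close to $B$, and the Birkhoff--Smale Theorem~\ref{BS} then delivers the horseshoe.

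The main obstacle I anticipate is the third step in its honest $n$-dimensional form. In the plane one compares only two slopes, but here $T_{q_n}W_U$ is an $(n-1)$-plane and the order of contact is defined (Definition~\ref{def-ooc-manifold}) as a supremum over curves, so I must (a) keep $C^1$-control of the \emph{entire} tangent hyperplane of $f^{n}(\Lambda)$, not of a single curve---which is precisely what the $C^1$ conclusion of Lemma~\ref{lemma} supplies---and (b) select inside $W_U$ the curve realizing the contact order $l$, so that the competing rates are computed against the genuine order of contact rather than an artificially large one. Verifying that the two exponential rates never coincide for large $n$, uniformly in the relevant directions, and that the geometric crossing $q_n$ really exists on the correct side (where two-sidedness, i.e.\ the non-Newhouse hypothesis, is indispensable), is the crux of the argument.
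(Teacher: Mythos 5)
Your proposal follows essentially the same route as the paper: pass to $C^1$-linearizing coordinates, reduce the non-Newhouse tangency to a two-sided crossing, take the stable ``tongue'' $\Gamma$ near the origin with its order-$l$ contact, push the graph portion $\Lambda$ forward using the Singular $\lambda$-Lemma to get $C^1$-flattened disks meeting $\Gamma$, and conclude transversality by comparing the exponential rates $\alpha^{k(l-1)/l}$ (tilt of $\Gamma$ at the crossing) against $\alpha^{k}\beta_j^{-k}$ (tilt of the iterated disk) --- which is exactly the content of the paper's determinant computation in Lemma~\ref{formula}. The minor differences (your tilt-comparison phrasing versus the explicit $n\times n$ determinant $D_i$, and the precise exponent you assign to the disk's tilt) do not change the mechanism, since in either bookkeeping the ratio of the two rates tends to zero because $\Vert\mathcal A^{-1}\Vert$ and $\Vert\mathcal B\Vert$ are both less than $1$.
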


\begin{proof}{}
Consider a $C^1$ linearization $\phi: N'\mapsto N$ from a neighborhood
$N'\subset M$ of $p$ onto a neighborhood $N\subset {\mathbb{R}}^n$ of the
origin. We choose a linearization such that $f$ has a
normal Jordan form in the new coordinates. We may assume that $\phi$ takes the
local stable manifold at $p$ to a neighborhood in the $Y$-axis, and the local
unstable manifold to a neighborhood in the $X$-hyperplane.

There are images of $\phi (B)$ in $N$, under forward and backward iterates of
$f$, on both the local stable and local unstable manifolds of $0$.
There is a compact curve,
$\Gamma \subset N$ (see Figure~\ref{fig_proof-illustration}), with the
 following properties:
\begin{itemize}
\item $\Gamma $ is an image under $f$ of the stable manifold;
\item $\Gamma$ has finite contact with the $X$-plane at some isolated point \\$A = (A_1,...,A_{n-1},0)$;
\item $\Gamma$ has two-sided intersection with the $X$-plane at $A$;
\item

$A$ is arbitrarily close to the origin.
\end{itemize}

Any $C^1$-curve $\Gamma$ immersed in ${\mathbb{R}}^n$, which has finite order of contact with the $X$-hyperplane at point $A=(A_1,...,A_{n-1},0)$, locally can be parameterized as:
\[
\Gamma =
\left\{\begin{array}{lll}
u^1 & = & a_1(\tau ) +A_1\\
.&.&.\\
.&.&.\\
.&.&.\\
u^{n-1} & = & a_{n-1}(\tau )+A_{n-1}\\
v & = & {\tau}^l,
\end{array}
\right.
\]
\\
for some constants $l > 1$, $A_j$, and $C^1$ functions $a_j$, such that $a_j
(0)=0$ and $a_j '(\tau ) \leq r$ in a small neighborhood of the origin, $\max_j
|A_j| > 0$ ($j=1,...,n-1$).

Since $\Lambda$ is a graph portion, it can be parameterized as:
\[
 \Lambda =
\left\{\begin{array}{lll}
x^1 & = & t_1 \\
.   & . & .\\
.   & . & .\\
.   & . & .\\
x^{n-1} & = & t_{n-1}\\
y & = & \phi (t_1,...,t_{n-1}) +B,
\end{array}
\right.
 \]
for some constant $B \neq 0$ and $C^1$ function $\phi$, $\phi (0) =0$ .
\\

Now it suffices to refer
to the following lemma, in order to have the proof of this theorem completed.

\begin{lemma}\label{formula}

Suppose
\[
 f:(x,y) \to (\beta_1 x^1 + \sigma_2 x^2, \beta_2 x^2 + \sigma_3 x^3,...,\beta_{n-2} x^{n-2}+ \sigma_{n-1} x^{n-1},\beta_{n-1} x^{n-1}, \alpha y),
 \]

 \[
  0<\alpha <1 <\beta_j \quad (j=1,...,n-1)
\]
and $\sigma_k$ are $0$ or $1$ -- according to the Jordan normal
form of $f$.

Then there exists a natural number $k_*$ such that
$ f^k\Lambda$ crosses  $\Gamma$  transversely  if $ k>k_*.$

\end{lemma}

% =========================================
\begin{figure}[th]
\begin{center}
\begin{psfrags}
\psfrag{xyn}[][]{$(x_n, y_n)$}
\psfrag{xy1}[][]{$(x_1, y_1)$}
\psfrag{f(xyn)}[][]{$f^n(x_n, y_n)$}
\psfrag{f(xy1)}[][]{$f^n(x_1, y_1)$}
\psfrag{T}[][]{$f$}
\psfrag{f(l)}[][]{$f(\Lambda)$}
\psfrag{f(f(l))}[][]{$f^2(\Lambda)$}
\psfrag{fn(l)}[][]{$f^n(\Lambda)$}
\psfrag{G}[][]{$\Gamma$}
\psfrag{o}[][]{$0$}
\psfrag{A}[][]{$B$}
\psfrag{aA}[][]{$\alpha B$}
\psfrag{a2A}[][]{$\alpha^2 B$}
\psfrag{anA}[][]{$\alpha^n B$}
\psfrag{B}[][]{$A$}
\psfrag{l}[][]{$\Lambda$}
\psfrag{X}[][]{$X$}
\psfrag{Y}[][]{$Y$}
     \epsfxsize=4.0in\leavevmode\epsfbox{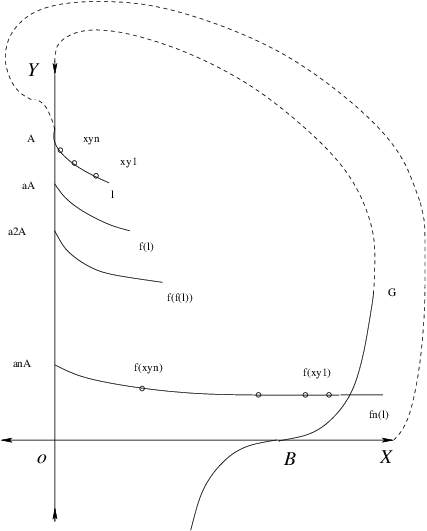}
\end{psfrags}
\end{center}\vspace{-0.1in}
\caption{Degenerate homoclinic crossing near the fixed point $0$. Illustration of the proof.}
\label{fig_proof-illustration}
\end{figure}
% =========================================

\begin{proof}{}
To simplify notations, first assume that $f$ has a diagonal Jordan normal form,
i.e., all $\sigma_k$ are 0. Consider the iterations of $\Lambda$ by $f$:
\[
f^{k_i}\Lambda =
\left\{\begin{array}{lll}
u^1 & = & \beta _1^{k_i}t_1\\
.   & . & .\\
.   & . & .\\
.   & . & .\\
u^{n-1} & = & \beta_{n-1}^{k_i}t_{n-1}\\
v & = & \alpha^{k_i} \left( \phi  (t_1,...,t_{n-1}) + B\right)\\
\end{array}
\right.
\]

We can find sequences  $(x_i,y_i) \in \Lambda$ and $k_i \rightarrow \infty$
such that $(x_i,y_i)\rightarrow  (0,B)$ and $f^{k_i}(x_i,y_i)\rightarrow
(A,0)$. This implies that $f^{k_i}\Lambda $ and $\Gamma $ will cross at some
point other than $(0,B)$ and $(A,0)$ (see Figure~\ref{fig_proof-illustration}).
Thus, we only need to show that this intersection is transversal.

Consider the matrix, constructed in the following way:
\begin{itemize}
\item The first $n-1$ rows represent the linear space of the tangent hyperplane
to the $(n-1)$-manifold $f^{k_i} \Lambda $.
\item The last row represents the linear space of the
tangent line to the curve $\Gamma$.
\end{itemize}
In order to prove that $f^{k_i} \Lambda $ crosses $\Gamma$ transversely at some
points of the form
$(u^1(t_i),...,u^{n-1}(t_i),v(t_i))$, we show that the determinant of the
matrix below is never $0$ for ${k_i}$ sufficiently large. We consider the
following determinant $D_i$, which corresponds to the $k_i$-th iteration of
$f$:
\[
D_i=
\left|\begin{array}{ccccc}
\beta_1^{k_i}&\nonumber&0&\nonumber&\alpha^{k_i}\frac{\partial}{\partial t_1}\phi(t_i)
\\
\nonumber&\nonumber&\nonumber&\nonumber&\nonumber
\\

\nonumber&\nonumber&\nonumber&\nonumber&\nonumber
\\
\nonumber&\ddots&\nonumber&\nonumber&\vdots
\\
\nonumber&\nonumber&\nonumber&\nonumber&\nonumber
\\
0&\nonumber&\beta_{n-1}^{k_i}&\nonumber&\alpha^{k_i}\frac{\partial}{\partial t_{n-1}}\phi(t_i)
\\
\nonumber&\nonumber&\nonumber&\nonumber&\nonumber
\\
\nonumber&\nonumber&\nonumber&\nonumber&\nonumber
\\
a_1 '(\tau _i)&\ldots &a_{n-1} '(\tau_i)&\nonumber&l \tau_i^{l-1}
\\
\end{array}\right|
 \]

Since $\Lambda$ intersects $\Gamma$,
\[
a_j(\tau _i) + A_j = \beta_j^{k_i}t_i, \ \ \ \ \ \ \ \ j= 1, ..., n-1
\]
and
\[
\tau _i^l = \alpha ^{k_i}(\phi(t_i) + B).
\]
Then, it follows that
\[
\tau ^{l-1} = \alpha ^{k_i\frac{l-1}{l}}\left( \phi(t(\tau)) + B \right)^{\frac{l-1}{l}}
\]
for any small value of the parameter $\tau$, and the corresponding small value
of the parameter $t(\tau )$.

We substitute the above expression in the matrix $D_i$ and calculate its determinant:
\[
D_i= \beta_1^{k_i}...\beta_{n-1}^{k_i}\Bigg[
 (-1)^{n-1}l \alpha ^{k_i\frac{l-1}{l}}(\phi (t(\tau))
+ B)^{\frac{l-1}{l}}
\]
\[\left.
 + \sum_{j=1}^{n-1}\alpha ^{k_i} \frac{\partial}{\partial
t_j}\phi(t(\tau))(-1)^j a_j'(\tau )\beta _j^{-k_i} \right]
\]

We can assume that $\phi (t(\tau)) + B > B/2$. Also, $a_j'(\tau)$ in our
parameterization of $\Gamma$ are bounded from above by $r$.

By Lemma~\ref{lemma} for any $\epsilon >0$ there exists
$k_*$ such that for any $k > k_*$
\[
\alpha ^k \frac{\partial}{\partial t_j}\phi(t(\tau)) < \epsilon,
\]
for all $j = 1, ... n-1$.
Then
\[
| D_i | > \beta_1^{k_i}...\beta_{n-1}^{k_i}
\left[
l \alpha ^{k_i\frac{l-1}{l}}(B/2)^{\frac{l-1}{l}}
 -
\alpha ^{k_i} \epsilon (n-1) r \beta _j^{-k_i}
\right]
\]
\[
= \beta_1^{k_i}...\beta_{n-1}^{k_i}\alpha ^{k_i\frac{l-1}{l}}
\left[
l (B/2)^{\frac{l-1}{l}}
 -
\alpha ^{k_i/l} \epsilon (n-1) r \beta _j^{-k_i}
\right]
\]
Take
\[
\epsilon < \frac{l(B/2)^{\frac{l-1}{l}}}{r(n-1)}
\]
Then,
\[
\left[
l (B/2)^{\frac{l-1}{l}}
 -
\alpha ^{k_i/l} \epsilon (n-1) r \beta _j^{-k_i}
\right]
\]
is positive for any sufficiently big iteration $k_i$.

\bigskip
If $f$ is non-diagonalizable, using the Jordan normal form, one can see that
\[
D_i= \beta_1^{k_i}...\beta_{n-1}^{k_i}
\left[
(-1)^{n-1}l \alpha ^{k_i\frac{l-1}{l}}(\phi (t(\tau)) + B)^{\frac{l-1}{l}}
\right.
\]
\[
\left.
 +
\sum_{j=1}^{n-1}\alpha ^{k_i} \frac{\partial}{\partial t_j}\phi(t(\tau))(-1)^j a_j'(\tau )\beta _j^{-k_i}
\right.\]
\[\left.
- \sum_{j=1}^{n-1}\alpha ^{k_i} \frac{\partial}{\partial
t_j}\phi(t(\tau))P_{k_i -1}(\beta _j) \beta _j^{-k_i} \right],
\]
where $P_{k_i -1}(\beta _j)$ are polynomial functions of degree
$k_i -1$.
\\
Then,
\[
| D_i | > \beta_1^{k_i}...\beta_{n-1}^{k_i}\alpha ^{k_i\frac{l-1}{l}}
\left[
l (B/2)^{\frac{l-1}{l}}
 -
\right.\]
\[\left.
\left(\alpha ^{k_i/l} \epsilon (n-1) r \beta _j^{-k_i}
+
\alpha ^{k_i/l} \epsilon (n-1) \left|P_{k_i -1}(\beta _j)\right| \beta _j^{-k_i}
\right)\right]
\]
and
\[
\left[
l (B/2)^{\frac{l-1}{l}}
 -
\left(\alpha ^{k_i/l} \epsilon (n-1) r \beta _j^{-k_i}
+
\alpha ^{k_i/l} \epsilon (n-1) \left|P_{k_i -1}(\beta _j)\right| \beta _j^{-k_i}
\right)\right]
\]
is also non-zero, because $\alpha ^{k_i/l}$ tends to $0$ exponentially, as $k_i\to\infty$.

This implies that for some $k_*$, the iterations $f^{k_i}$
($k_i> k_*$) applied to the manifold $\Lambda$ near the origin will produce a
transverse crossing with $\Gamma$.

\end{proof}

Applying Lemma~\ref{formula}, we complete the proof of
Theorem~\ref{main-thm}.
\end{proof}

\begin{figure}[th]
\begin{center}
\begin{psfrags}
\psfrag{p}[][]{$0$} \psfrag{B}[][]{$A$} \psfrag{U}[][]{$W_U$}
\psfrag{S}[][]{$W_S$} \psfrag{L}[][]{$\Lambda$}
\psfrag{fL}[][]{$f(\Lambda)$} \psfrag{ffL}[][]{$f^2(\Lambda)$}
\psfrag{G}[][]{$\Gamma$}
     \epsfxsize=2.55in\leavevmode\epsfbox{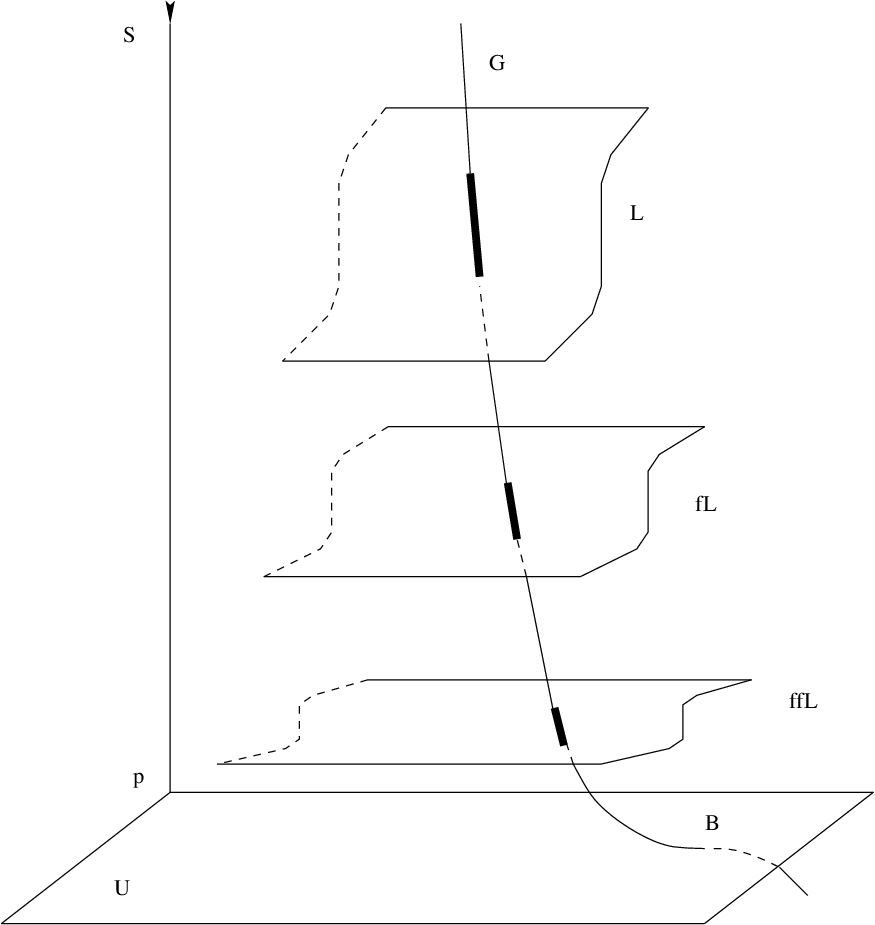}
\end{psfrags}
\end{center}
\caption{$\Lambda$ is not a graph portion. Intersections of
$\Gamma$ with $f^n(\Lambda)$ are indicated by a thicker line.
$\Gamma$ and $f^n(\Lambda)$ have no transversal intersections.}
\label{theorem_counter_ex}
\end{figure}

\begin{remark}\rm
It is clear from the proof of Theorem~\ref{main-thm} that, if $W_U$ does not
contain any graph portion, then $W_U$ possibly has no transversal crossings
with $\Gamma$. Figure~\ref{theorem_counter_ex} shows that $\Gamma \cap
f^n(\Lambda)$  can be a 1-dimensional manifold.
\end{remark}

 \medskip

 Received October 2003; revised September 2004.
 \medskip

\end{document}